\newtheorem{theorem}{Theorem}[section]
\newtheorem*{conjecture}{Conjecture}
\newtheorem{remark}{Remark}[theorem]
\newtheorem{definition}{Definition}[section]
\newtheorem{lemma}[theorem]{Lemma}
\newtheorem{proposition}[theorem]{Proposition}
\newtheorem{question}[theorem]{Question}
\newtheorem{example}[theorem]{Example}
\newtheorem*{theorem*}{Theorem}
\newtheorem*{question*}{Question}
\begin{document}

\title{Closed points on cubic hypersurfaces}
\date{\today}

\author{Qixiao Ma}
\address{Department of Mathematics, Columbia University,}
\email{qxma10@math.columbia.edu}

\begin{abstract}
   We generalize some results of Coray on closed points on cubic hypersurfaces. We show certain symmetric products of cubic hypersurfaces are stably birational.
\end{abstract}

\maketitle

\tableofcontents
\section{Introduction}
Let $k$ be a field, let $X$ be a variety defined over $k$. By definition, a $k$-rational point on $X$ gives a zero-cycle of degree $1$ on $X$. Conversely, one may ask:
\begin{question*}If $X$ has a zero-cycle of degree $1$, does $X$ necessarily have a $k$-rational point?
\end{question*}
Here are some partial answers to this question:
\begin{itemize}[leftmargin=0.8cm]
\item Yes, if $X$ is a torsor of an abelian variety, see \cite{LT}.
\item Yes, if $X$ is a Brauer-Severi variety, i.e. $X_{\overline{k}}\cong\mathbb{P}^{\mathrm{dim}(X)}_{\overline{k}}$, see \cite{GS}.
\item No, if we only ask $X$ to be a geometrically rational variety, see \cite{CTC}.
\item Serre asked the question for $X$ being homogeneous space of algebraic groups, see \cite{Se}, \cite{tot}, \cite{bla} and \cite{Pari} for various results.
\end{itemize}

When $X$ is a cubic hypersurface, Cassels and Swinnerton-Dyer conjectured ``yes'':
\begin{conjecture}[CS, \cite{Coray}] Let $F(X_0,\dots,X_n)$ be a cubic form with coefficients in a field $k$. Suppose $F$ has a non-trivial solution in an algebraic extension $K/k$ of degree $d$ prime to $3$. Then $F$ also has a non-trivial solution in the ground field $k$.
\end{conjecture}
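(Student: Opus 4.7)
The plan is to attempt a degree-reduction strategy in the spirit of Coray, reducing the degree of a closed point on $X := \{F = 0\}$ by residual-intersection manipulations with lines, tangent lines, and twisted cubics passing through the point.

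First I would reduce to the case of a single closed point of degree $d$ prime to $3$. If $X$ carries a zero-cycle of degree $1$, then the greatest common divisor of the degrees of its closed points equals $1$; in particular not every closed point has degree divisible by $3$, so (as $3$ is prime) there exists a closed point $P$ whose residue field $K := \kappa(P)$ has degree $d$ over $k$ with $\gcd(d,3) = 1$. Choose $P$ of minimal such degree $d$; the goal is to prove $d = 1$.

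Next, pass to $K$, where $P$ becomes a $K$-rational point $P^{\sharp}$ of $X_K$. A line in $\mathbb{P}^n$ meets $X$ in a degree-$3$ cycle, so one can trade a known contribution at $P^{\sharp}$ for a complementary residual contribution: the tangent line to $X_K$ at a smooth $P^{\sharp}$ meets $X_K$ with multiplicity $\ge 2$ at $P^{\sharp}$, leaving a residual $K$-scheme of degree $\le 1$; a secant through $P^{\sharp}$ and a chosen auxiliary point leaves a complementary residual; twisted cubics incident to $P^{\sharp}$ and further data yield residual cycles of bounded degree over $K$. Pushing these residuals down via $N_{K/k}$ produces new closed points of $X$ whose degrees divide explicit arithmetic functions of $d$. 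A delicate combination of such moves is what Coray used on cubic surfaces to force the minimal degree into the set $\{1,4,10\}$, and the same manipulations, together with the richer geometry available in higher dimension (larger Fano schemes of lines, more room to move osculating data around), are the natural starting point here.

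The hard part will be bridging the residual gap. The moves above only produce closed points whose degree divides an explicit function of $d$, and the exceptional small values (e.g.\ $d \in \{2,4,5,7,10,\dots\}$) cannot be obviously beaten down to $1$ by intersection-theoretic congruences alone. Circumventing this appears to require either genuinely new geometric input (the Fano scheme of lines, or higher-degree rational curves on $X$), a cohomological or unramified-invariant vanishing ruling out degree-$d$ points in the absence of rational points, or --- in line with this paper's approach --- exploiting stable birational equivalences among the symmetric products $\mathrm{Sym}^d(X)$ in order to translate a $k$-rational point of some $\mathrm{Sym}^d(X)$ (given for free by an effective degree-$d$ zero-cycle on $X$) back to a $k$-rational point of $X$ itself. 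I expect these low-degree residual cases to be the genuine obstacle, consistent with the conjecture being open in general.
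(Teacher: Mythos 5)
This statement is the Cassels--Swinnerton-Dyer conjecture, and the paper does not prove it: it is stated explicitly as a \emph{conjecture}, and the body of the paper only establishes partial results in its direction (descent to closed points of degree $1$, $4$ or $10$ on regular cubic surfaces in Theorem \ref{Pr}, and descent from degree $7$ or $8$ points on cubic $3$- and $4$-folds in Theorem \ref{Cubic4}). Your proposal is likewise not a proof: it is a strategy sketch that ends by conceding that the ``low-degree residual cases'' are a genuine obstacle and that the conjecture remains open. That concession is correct, but it means there is no complete argument here to evaluate as a proof; what you have written is a (reasonable) description of Coray's programme together with an accurate assessment of where it stalls.

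Two more specific points. First, your opening reduction is garbled: the hypothesis is a nontrivial solution over an extension of degree $d$ prime to $3$, which directly yields a closed point of degree dividing $d$ (hence prime to $3$); there is no need to invoke a zero-cycle of degree $1$, and the gcd statement you assert does not follow from the hypothesis anyway. Second, your step ``pass to $K$, where $P$ becomes a $K$-rational point'' works against you: tangent-line and secant constructions applied to a $K$-rational point produce residual points that are again $K$-rational, and norming down via $N_{K/k}$ only returns closed points of degree dividing $d$ (or multiples thereof) --- it does not reduce $d$. Coray's actual mechanism, which the paper follows, keeps the degree-$d$ closed point $P$ over $k$ as a single Galois orbit, passes auxiliary hypersurfaces (or rational normal curves, in Sections \ref{sec4}--\ref{sec7}) through $P$ defined over $k$, and extracts the residual intersection as a zero-cycle of smaller degree prime to $3$ via Lemma \ref{Lm} and Riemann--Roch on the residual curve. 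Even so, that machinery bottoms out at the degrees $4$ and $10$ for surfaces (see the paper's dimension count in the remark following Theorem \ref{Th}), which is exactly why the conjecture is open; no argument in the paper, and none in your proposal, closes that gap.
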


Coray worked over perfect fields, and reduced the conjecture for $n=3$ to smooth cubic surfaces with a closed point of degree $4$ or $10$, see \cite[7.1]{Coray} and \cite[8.2]{Coray}. We remove the perfectness assumption and study some related questions.

In section \ref{sec3}, after briefly review Coray's result, we show:
\begin{theorem*}
Let $k$ be a field, let $X$ be a regular cubic surface over $k$. Then the existence of a closed point $P$ on $X$ with $3\nmid\mathrm{deg}(P)$ implies that $X$ contains a closed point of degree $1$, $4$ or $10$.
\end{theorem*}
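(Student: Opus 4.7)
The plan is to adapt Coray's degree-reduction argument to the imperfect setting, inducting on the degree $d$ of the closed point. The base cases $d\in\{1,4,10\}$ are already the conclusion; for every other prime-to-$3$ value of $d$, I would produce a closed point of strictly smaller prime-to-$3$ degree on $X$ and iterate.

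The reduction step follows Coray's intersection recipe. Given a closed point $P$ of degree $d$ on $X$, I would choose a rational curve $C\subset\mathbb{P}^3$ of controlled degree $e$ whose scheme-theoretic intersection with $X$ contains $P$. Then $X\cdot C$ is an effective $0$-cycle on $X$ of degree $3e$, and the residual cycle $X\cdot C-P$ has degree $3e-d$. Extracting from this residual cycle a closed point of prime-to-$3$ degree strictly less than $d$ completes the induction step. The arithmetic of possible pairs $(d,e)$ shows that $d=1,4,10$ are precisely the stopping values: for the small degrees $d\in\{2,5,7,8\}$ the curve $C$ can be a line, a plane conic, or a plane cubic; for $d\geq 11$ one uses twisted cubics or other moderate-degree rational curves, the existence of which is a dimension count on an appropriate Hilbert scheme.

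The principal obstacle --- and the difference between Coray's perfect-field setting and the present one --- lies in two points. First, the construction of $C$ requires the Hilbert scheme of degree-$e$ curves through $P$ to have a $k$-point, which is automatic after base change to $\overline{k}$ but can be delicate over $k$ when $k(P)/k$ is inseparable. I would address this by descent: the condition ``$C$ contains the closed subscheme $P$'' is defined over $k$, so if the corresponding parameter space is geometrically nonempty and integral, it has $k$-points after passing to $k(t_{1},\ldots,t_{r})$ (itself a prime-to-$3$ extension), and we may descend at the end. Second, and more seriously, the residual cycle $X\cdot C-P$ must contain a closed point of the prescribed prime-to-$3$ degree; over a perfect field Coray invokes generic smoothness to ensure the intersection is transverse away from $P$, but over imperfect $k$ generic smoothness can fail. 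I would circumvent this by varying $C$ in a sufficiently ample family, using the Jacobian criterion for regularity (rather than smoothness) to verify transversality at generic points, and tracking how inseparability can redistribute the residual cycle among several closed points. This transversality-and-bookkeeping step is the one I expect to be the hardest.
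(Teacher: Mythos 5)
Your proposal attempts a direct, characteristic-$p$ adaptation of Coray's descent, and the two places where you flag difficulty are exactly where the argument genuinely breaks --- and your proposed fixes do not close the gap. The paper itself points out the obstruction: if $\kappa(P)/k$ is inseparable, it is not even clear that $P_{\overline{k}}$ remains a subscheme of the reduced curve $C_{\overline{k},\mathrm{red}}$ one must pass to, so the Riemann--Roch step on the normalization of the residual curve cannot be run as in Coray. Saying you will ``use the Jacobian criterion for regularity'' and ``track how inseparability redistributes the residual cycle'' names the problem rather than solving it; no mechanism is given for producing a prime-to-$3$ closed point out of a residual cycle supported on inseparable points. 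Your descent step is also unjustified: $k(t_1,\dots,t_r)/k$ is not a finite extension, so ``prime to $3$'' is vacuous for it, and a $k(t_1,\dots,t_r)$-point of the parameter space of curves through $P$ does not descend to a $k$-point without further argument (specialization would require properness and smoothness hypotheses you have not established for that space).

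Separately, your description of the reduction step is not Coray's. For cubic surfaces he intersects $X$ with an auxiliary \emph{surface} $F\subset\mathbb{P}^3$ of low degree through $P$ and applies Riemann--Roch to a divisor on the curve $X\cap F$ (or on the normalization of its reduction); the exceptional values $4$ and $10$ arise as $3l(l-1)/2+1$ for $l=2,3$, from the dimension count $3l(l+1)/2-1<d+9$ for surfaces tangent to $X_{\overline{k}}$ at three points away from $P$ --- not from an arithmetic of rational space curves. (Residual intersection with rational normal curves is the paper's tool for cubic $3$- and $4$-folds, a different section.)

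The paper avoids all of these issues by a lifting trick that you should consider: regularity of $X$ makes $P\hookrightarrow X$ a local complete intersection, hence $[P]$ a \emph{smooth} point of $\mathrm{Hilb}^d_{X/k}$; choosing a Cohen ring $R$ with residue field $k$ and characteristic-zero fraction field $K$, and any cubic $\mathcal{X}/R$ lifting $X$, Hensel's lemma lifts $[P]$ to an $R$-point of $\mathrm{Hilb}^d_{\mathcal{X}/R}$. One then applies Coray's theorem over the perfect field $K$, specializes back using properness of the Hilbert scheme and the valuative criterion, and extracts a prime-to-$3$ closed point from the resulting Artinian scheme by an elementary lemma on Artinian algebras. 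This reduces the entire imperfect-field difficulty to the already-known characteristic-zero case, which is why the paper never has to confront the transversality and inseparability bookkeeping your proposal leaves open.
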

\noindent In section \ref{sec4}, following Coray's idea of taking intersection with rational normal curves, we show in some cases, one can still descend the degree of closed points on cubic 3-folds and 4-folds.

\noindent In section \ref{sec5}, we reinterpret the previous results in terms of rational map between symmetric products. In section \ref{sec6}, we study the rationality of the moduli spaces which parameterize rational normal curves passing through a collection of closed points. In section \ref{sec7}, we use the previous result to show stable birationality between symmetric products of cubic hypersurfaces:
\begin{theorem*}Let $k$ be a field, let $X$ be a smooth cubic 3-fold (4-fold), then $\mathrm{Sym}^7(X)$ and $\mathrm{Sym}^5(X)$ (resp. $\mathrm{Sym}^8(X)$ and $\mathrm{Sym}^7(X)$) are stably birational.
\end{theorem*}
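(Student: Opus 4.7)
The plan is to relate the two symmetric products by an incidence correspondence cut out by rational normal curves, in the spirit of Coray's technique. I treat the 3-fold case in detail; the 4-fold case is parallel.

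Let $X\subset \mathbb{P}^4$ be a smooth cubic 3-fold. A rational normal quartic $C\subset\mathbb{P}^4$ meets $X$ in a degree $12$ divisor, which I would split as $7+5$. Define the incidence
\[
Z=\{(C,D_7,D_5)\mid C \text{ is an RNC of degree } 4,\ C\cap X = D_7+D_5,\ \deg D_7=7,\ \deg D_5=5\},
\]
with projections $\pi_7\colon Z\to\mathrm{Sym}^7(X)$ and $\pi_5\colon Z\to\mathrm{Sym}^5(X)$. The underlying numerics are the following: the parameter space of RNCs of degree $4$ in $\mathbb{P}^4$ has dimension $21$; through a general point of $\mathbb{P}^4$ there is a family of codimension $3$; hence through $5+3=7$ general points there is a \emph{unique} such curve, and through $5$ general points the family has dimension $6$. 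These counts match $\dim\mathrm{Sym}^7(X)-\dim\mathrm{Sym}^5(X)=6$.

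\textbf{Step 1 (birationality of $\pi_7$).} For $7$ general points on $X$ one shows, using that $X$ is non-degenerate in $\mathbb{P}^4$, that they are in general linear position in $\mathbb{P}^4$, and hence lie on a unique RNC. This makes $\pi_7$ birational.

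\textbf{Step 2 (structure of $\pi_5$).} The generic fiber of $\pi_5$ is the moduli of rational normal quartics in $\mathbb{P}^4_K$ passing through the universal degree-$5$ effective zero-cycle, where $K=k(\mathrm{Sym}^5(X))$. The rationality results of the preceding section, applied over $K$, show this $6$-dimensional fiber is birational to $\mathbb{P}^6_K$. Consequently $Z$ is birational to $\mathrm{Sym}^5(X)\times\mathbb{P}^6$ over $k$.

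\textbf{Step 3 (conclusion).} Combining Steps 1 and 2 gives
\[
\mathrm{Sym}^7(X)\sim_{\mathrm{bir}}Z\sim_{\mathrm{bir}}\mathrm{Sym}^5(X)\times\mathbb{P}^6,
\]
so in particular the two are stably birational. For the cubic 4-fold $X\subset\mathbb{P}^5$, I would repeat the argument with rational normal quintics, whose intersection with $X$ has degree $15=8+7$: $8=5+3$ general points determine a unique RNC, and through $7$ general points the family is $4$-dimensional. The same three steps yield $\mathrm{Sym}^8(X)\sim_{\mathrm{bir}}\mathrm{Sym}^7(X)\times\mathbb{P}^4$.

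The main obstacle is Step 2. The points on $X$ that cut out the universal divisor $D_5$ (resp.\ $D_7$) are not $k$-points but a single closed point of degree $5$ (resp.\ $7$) on $X_K$, so the moduli of RNCs through them is a Galois twist of the classical space of RNCs through $5$ (resp.\ $7$) points, and one must verify that this twist remains rational. This is precisely the content of the rationality results for moduli of rational normal curves through collections of closed points established in the previous section; everything else is a dimension count and the standard Hilbert-scheme-of-points formalism.
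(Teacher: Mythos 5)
Your proposal is correct and follows essentially the same route as the paper: your incidence correspondence $Z$ is just the graph of the paper's rational map $\mathrm{Sym}^{n+4}(X)\dashrightarrow\mathrm{Sym}^{2n-1}(X)$, $P\mapsto (C_P\cap X)\setminus P$, your Step 1 is the uniqueness of the rational normal curve through $n+4$ points in linearly general position, and your Step 2 is exactly the paper's identification of the generic fiber with the twisted moduli space $F_{R,\cdot}$ over $k(\mathrm{Sym}^{2n-1}(X))$, whose rationality is Propositions \ref{n+1} and \ref{n+2odd}. (Your fiber dimensions, $\mathbb{P}^6$ and $\mathbb{P}^4$, are the correct ones.)
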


\textbf{Acknowledgements.} I am very grateful to my advisor Aise Johan de Jong for his invaluable ideas, enlightening discussions and unceasing encouragement.
\section{Cubic surfaces}\label{sec3}
\subsection{Coray's results}
Coray used geometric arguments to descend the degree of closed points on cubic surfaces. We briefly review the argument for smooth cubic surfaces \cite[7.1]{Coray} and explain some difficulties to carry out Coray's argument over imperfect fields. Here is the statement:

\begin{theorem}[Coray]{\label{Th}}Let $k$ be a perfect field. Let $X\subset\mathbb{P}^3_k$ be a smooth cubic surface. If $X$ contains a closed point $P$ of degree prime to $3$, then $X$ contains a closed point $Q$ of degree $1,4$ or $10$.
\end{theorem}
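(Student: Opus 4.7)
The natural approach is induction on the degree $d$ of the closed point $P$ via \emph{residual intersections}: for $P$ of degree $d$ with $3 \nmid d$, we try to produce an auxiliary curve $C \subset \mathbb{P}^3_k$, defined over $k$, of some degree $e$, passing through $P$ transversely, so that Bezout gives a cycle-theoretic identity $C \cdot X = P + Z$ with $\deg Z = 3e - d$. The congruence $3 \nmid d$ immediately gives $3 \nmid 3e-d$, so $Z$ must contain at least one closed point $Q$ of degree prime to $3$. If in addition $e < 2d/3$, then $\deg Z < d$ and that component $Q$ of $Z$ satisfies $\deg Q \le \deg Z < d$, so the induction continues. The theorem then asserts that this descent terminates precisely at $d \in \{1, 4, 10\}$.

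The auxiliary curves $C$ will be rational curves of small degree. For the low-degree cases one writes down explicit constructions: the unique line through a degree-$2$ point yields residual degree $1$; a smooth $k$-plane conic through a degree-$5$ point yields residual degree $1$; a twisted cubic through a degree-$7$ or $8$ point yields residual degree $2$ or $1$. For larger $d$ with $3 \nmid d$ and $d \ne 10$ one uses rational normal curves of degree roughly $\lceil d/2 \rceil$, or other suitable rational curves on or through $X$. In each case existence of $C$ over $k$ requires a $k$-point on an appropriate Hilbert or Hom scheme parametrising curves of degree $e$ through $P$; this is where the perfectness of $k$ enters, to ensure that the parameter space is geometrically integral and unirational, hence admits many $k$-points by a Bertini-type smoothness argument on the locus of curves meeting $X$ properly at $P$.

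The hard part will be to handle \emph{transversality}: we need $C$ to meet $X$ with multiplicity exactly one along each geometric point of $P$, so that the residual $Z$ really has degree $3e - d$ and is, to first order, reduced. A failure of transversality would absorb part of $Z$ back into $P$ and break the degree count. This is controlled by choosing $C$ generic in its parameter space, and it is precisely this genericity step that fails when $d = 4$ or $d = 10$: the relevant parameter spaces of rational curves of the minimal useful degree $e < 2d/3$ through a degree-$d$ point turn out to have non-positive expected dimension, so no generic $C$ exists and the descent method halts. Consequently $\{1, 4, 10\}$ is the exact list of obstructions, and the theorem follows by iterating the reduction until one of these three degrees is reached.
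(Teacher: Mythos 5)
Your strategy (intersect $X$ with an auxiliary \emph{curve} through $P$ and take the Bezout residual) is not the route the paper takes for surfaces: Coray's argument, as sketched in Section 2, intersects $X$ with an auxiliary \emph{surface} $F$ of low degree, obtaining a curve $C=X\cap F$ of controlled genus, and then uses Riemann--Roch on $C$ (after normalizing $C_{\mathrm{red}}$ if necessary) to move $P$ to an effective divisor of smaller degree prime to $3$. The exceptional values $4$ and $10$ arise there from the count $3l(l+1)/2-1<d+9$ for $d=3l(l-1)/2+1$, $l=2,3$, i.e.\ from a shortage of degree-$l$ surfaces through $P$ tangent to $X$ at three further points --- not from a shortage of rational curves through $P$. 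Your curve-residual idea is essentially the paper's Section~\ref{sec4} technique, which is deployed there only for cubic $3$-folds and $4$-folds and only for very specific degrees, for reasons your proposal runs into below.

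There are genuine gaps. First and most seriously, your existence argument for $C$ over $k$ rests on ``geometrically integral and unirational, hence admits many $k$-points''; this is false over a non-closed field --- a variety that is unirational over $\overline{k}$ (even $\overline{k}$-rational) need not have a single $k$-point, and producing a $k$-point on the parameter space of degree-$e$ rational curves through $P$ is a problem of exactly the same nature as the theorem you are trying to prove. (This is why Sections~\ref{sec6}--\ref{sec7} of the paper work hard to prove $k$-rationality of the spaces $F_{P,n}$.) Second, several of your explicit constructions are geometrically impossible: five general points of $\mathbb{P}^3$ are not coplanar, so no plane conic passes through a general degree-$5$ point; and a twisted cubic passes through only $6$ general points of $\mathbb{P}^3$ (the unique rational normal curve through $n+3$ general points lives in $\mathbb{P}^n$), so there is no twisted cubic through a general degree-$7$ or degree-$8$ point --- this is precisely why the paper's Theorem~\ref{Cubic4} concerns $\mathbb{P}^4$ and $\mathbb{P}^5$ rather than $\mathbb{P}^3$. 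Third, your explanation of why the descent halts exactly at $\{1,4,10\}$ does not survive your own dimension count: degree-$6$ rational curves through $10$ general points form a family of expected dimension $4\cdot 6-2\cdot 10=4>0$ with residual degree $18-10=8<10$, so your criterion would wrongly predict that $d=10$ is not exceptional. The transversality issue you flag is real but secondary; the argument does not get far enough for it to be the binding constraint.
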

When $k$ is a finite field, the theorem follows from Chevalley's theorem, see \cite[I.2.2]{Serre}. For infinite fields, the theorem follows from repeated application of the following lemma:
\begin{lemma} Let $k$ be an infinite perfect field. Let $X\subset\mathbb{P}^3_k$ be a smooth cubic surface. Let $d$ be a positive integer such that $d\neq 4,10$ and $3\nmid d$. If $X$ contains a closed point $P$ of degree $d$, 
then $X$ contains a point $Q$ of degree $d'<d$ and $(d',3)=1$.
\end{lemma}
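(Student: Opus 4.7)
The plan is to use a Bezout-and-residual argument: find a rational curve $C \subset \mathbb{P}^3_k$ defined over $k$, passing through $P$, of degree $\delta$ lying in the range $d/3 < \delta < 2d/3$. Provided $C \not\subset X$, Bezout gives $\deg(C \cap X) = 3\delta$, so the residual $R := (C \cap X) \setminus P$ has degree $d' := 3\delta - d$ satisfying $0 < d' < d$. Since $3 \mid 3\delta$ while $3 \nmid d$, the integer $d'$ is coprime to $3$. Decomposing $R = \bigsqcup_i Q_i$ as a disjoint union of closed points, at least one $\deg Q_i$ must be coprime to $3$, and any such $Q_i$ has degree $\leq d' < d$, furnishing the required $Q$.

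Concretely, for the small values $d \in \{2,5,7,8\}$ I would use rational normal curves in $\mathbb{P}^3$, which exist only in degrees $1, 2, 3$: for $d = 2$ take the $k$-line through the two Galois conjugates of $P$ (giving $d' = 1$); for $d = 5$ use a twisted cubic through $P$ ($d' = 4$); and for $d = 7, 8$ use a twisted cubic ($d' = 2, 1$). The main point in each case is the existence over $k$ of the required RNC through $P$: the Hilbert scheme of twisted cubics in $\mathbb{P}^3$ has dimension $12$ and the condition of passing through $P$ imposes at most $2d$ conditions, so the $k$-parameter space of RNCs through $P$ is a Galois-invariant subvariety cut out inside a unirational $k$-scheme. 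By infiniteness and perfectness of $k$, a $k$-rational member of any non-empty such family can be extracted.

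For $d \geq 11$ no RNC in $\mathbb{P}^3$ has sufficient degree ($3 \cdot 3 = 9 < 11$), and this is the main technical obstacle. I would replace RNCs by general (possibly singular) rational curves of degree $\delta$ in $\mathbb{P}^3$, parametrized by degree-$\delta$ maps $\mathbb{P}^1 \to \mathbb{P}^3$ modulo $\mathrm{PGL}_2$; this is a unirational moduli of dimension $4\delta$. The condition of passage through $P$ costs at most $2d$ conditions, so whenever $\delta > d/2$ one obtains a positive-dimensional $\bar k$-family through $P$, and the same unirational descent then produces a $k$-rational member. An integer $\delta$ with $d/2 < \delta < 2d/3$ exists for every $d \geq 7$, in principle covering all remaining cases.

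The hard part throughout is the descent: producing the curve $C$ over $k$ itself, not merely over $\bar k$. Perfectness of $k$ enters in handling the reduced structure of the Galois orbit of the geometric points of $P$, and infiniteness of $k$ is what converts unirationality of the parameter space into the presence of a $k$-point. The excluded values $d = 4$ and $d = 10$ correspond precisely to the cases where this descent cannot be made to work: for $d = 4$ the sole candidate $\delta = 2$ forces $C$ to be a plane conic and hence requires the four Galois conjugates of $P$ to be coplanar, which is not a generic condition; for $d = 10$ each integer $\delta$ in the admissible interval $(10/3,\, 20/3)$ yields a parameter space which is either negative-dimensional or too special to guarantee the existence of a $k$-rational curve through $P$ via unirational descent.
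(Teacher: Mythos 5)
Your approach (cutting $X$ with a rational space curve through $P$ and taking the residual) is not the one the paper uses: the paper follows Coray and cuts $X$ with a low-degree \emph{surface} $F$ through $P$, then applies Riemann--Roch to a divisor on the complete intersection curve $X\cap F$ (or on the normalization of its reduction). Your residual bookkeeping is fine as far as it goes --- $3\delta-d$ is positive, less than $d$, and prime to $3$ precisely when $d/3<\delta<2d/3$, and the Galois-conjugacy of the $P_i$ forces the common intersection multiplicity $m$ to satisfy $md\leq 3\delta<2d$, hence $m=1$. But the argument has two genuine gaps.

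First, the concrete low-degree cases are wrong: a twisted cubic in $\mathbb{P}^3$ passes through at most $6$ points in linearly general position (the paper's $F_{P,n}$ is empty once $\deg(P)>n+3$), so there is no twisted cubic through a degree $7$ or degree $8$ point, and your own dimension count ($4\delta=12$ versus $2d=14$ or $16$ conditions) already rules this out. Second, and more fundamentally, the existence of the curve $C$ \emph{over $k$} is not established. The claim that a ``Galois-invariant subvariety cut out inside a unirational $k$-scheme'' must acquire a $k$-point over an infinite perfect field is false: the locus of degree-$\delta$ rational curves through $P$ is some $k$-variety $M$, and neither unirationality of the ambient moduli space nor non-emptiness of $M_{\overline{k}}$ gives $M(k)\neq\emptyset$ --- producing rational points on such parameter spaces is exactly the kind of problem this paper is about, and the paper only resolves it for rational normal curves through at most $n+3$ points via explicit rationality of $F_{P,n}$ (Section \ref{sec6}). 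Even over $\overline{k}$, non-emptiness of $M$ is an expected-dimension assertion, which does not follow from a condition count when the points $P_1,\dots,P_d$ are in special position (they lie on $X$ and form a Galois orbit; they may be collinear or coplanar --- the paper spends Section \ref{sec4} on exactly this degeneration issue, and only in ambient dimension $4$ and $5$). Finally, your explanation of the exceptional values $d=4,10$ does not match the actual obstruction, which in Coray's argument is that the family of degree-$l$ surfaces ($d=3l(l-1)/2+1$, $l=2,3$) is too small to impose tangency at three auxiliary points.
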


\begin{proof}(Sketch) Carefully choose a low degree surface $F\subset\mathbb{P}^3_k$ that contains $P$. Denote the curve $X\cap F$ by $C$. If $C$ is smooth over $k$, we find a divisor $D$ on $C$ such that $g(C)<\mathrm{deg}(D)<d$ and $3\nmid\mathrm{deg}(D)$. Riemann-Roch theorem implies $\mathrm{dim}|D|>0$. A general effective divisor in $|D|$ contains a closed point of degree $d'<d$ and $(d',3)=1$. If $C$ is not smooth, we replace $C$ by $C'=(C_{\mathrm{red}})^\nu$: normalization of the reduced subscheme. We show $P$ remains a Cartier divisor on $C'$, then argue as the previous case, see \cite[7.1(B)]{Coray}.
\end{proof}

\begin{remark}
In \cite{Coray}, Coray's notation for varieties are subvarieties of $\mathbb{P}^n(\overline{k})$, and his arguments are carried out on $C_{\overline{k},\mathrm{red}}$.
When $k$ is perfect, all constructions descend to $k$. If $k$ is not perfect, and suppose $P$ is not a separable point, then it is not clear if $P_{\overline{k}}$ is still a subscheme of $C_{\overline{k},\mathrm{red}}$, and we cannot directly apply the arguments in \cite[7.1(B)]{Coray}.
\end{remark}

\begin{remark} We explain where the argument breaks down for $d=4,10$. When $d=3l(l-1)/2+1$, in the argument, one needs to take a degree $l$ surface $F$ passing through $P$, such that $F_{\overline{k}}$ is tangent to $X_{\overline{k}}$ at three points in the complement of $P_{\overline{k}}$. These impose $d+9$ conditions on the choice of $F$. The degree $l$ surfaces that do not contain $X$ form a ${l+3\choose 3}-{l\choose 3}=3l(l+1)/2-1$ dimensional family.
For $l=2,3$, we cannot always pick such $F$ as $3l(l+1)/2-1<d+9$.
\end{remark}

\subsection{Regular cubic surfaces}
We show Theorem \ref{Th} still holds even if $k$ is not perfect. More precisely, for regular cubic surfaces, we adopt a lifting argument and get Theorem \ref{Pr}.

\begin{lemma}\label{Lm}Let $k$ be a field. Let $R$ be an artinian $k$-algebra, such that $3\nmid \mathrm{dim}_k(R)$. Then $R$ contains a maximal ideal $\mathfrak{m}$, such that $3\nmid[\kappa(\mathfrak{m})\colon k]$.\end{lemma}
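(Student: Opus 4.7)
The plan is to reduce to the local case by decomposing $R$ as a product of artinian local $k$-algebras and then tracking divisibility of dimensions.

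First I would use the structure theorem for artinian rings to write $R = \prod_{i=1}^{n} R_i$, where each $R_i$ is a local artinian ring with maximal ideal $\mathfrak{m}_i$ and residue field $\kappa_i = R_i/\mathfrak{m}_i$. The maximal ideals of $R$ are precisely the ideals of the form $\mathfrak{m}_i \times \prod_{j\neq i} R_j$, with residue field $\kappa_i$, so it suffices to find an index $i$ with $3\nmid[\kappa_i:k]$.

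Next I would compute $\dim_k(R_i)$ in terms of $[\kappa_i:k]$. Since $R$ is a finite-dimensional $k$-algebra (implicit in the hypothesis $3\nmid\dim_k(R)$), each $R_i$ is finite-dimensional over $k$, hence $[\kappa_i:k]$ is finite and $R_i$ is finite-dimensional over $\kappa_i$. Using the $\mathfrak{m}_i$-adic filtration $R_i\supset\mathfrak{m}_i\supset\mathfrak{m}_i^2\supset\dots\supset 0$, whose successive quotients are $\kappa_i$-vector spaces, one obtains
\[
\dim_k(R_i) \;=\; [\kappa_i:k]\cdot\dim_{\kappa_i}(R_i).
\]
In particular $[\kappa_i:k]$ divides $\dim_k(R_i)$.

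Summing over $i$ gives $\dim_k(R) = \sum_{i=1}^{n}[\kappa_i:k]\cdot\dim_{\kappa_i}(R_i)$. If every $[\kappa_i:k]$ were divisible by $3$, then $3$ would divide each summand and hence $\dim_k(R)$, contradicting the hypothesis. So some $[\kappa_i:k]$ is coprime to $3$, and the corresponding maximal ideal $\mathfrak{m}_i\times\prod_{j\neq i}R_j$ is the desired $\mathfrak{m}$. There is no real obstacle; the only mild subtlety is ensuring that $[\kappa_i:k]$ is finite (so that the product formula $\dim_k(R_i)=[\kappa_i:k]\cdot\dim_{\kappa_i}(R_i)$ makes sense), which follows from the standing assumption that $\dim_k(R)$ is finite.
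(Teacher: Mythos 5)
Your proof is correct and follows essentially the same route as the paper's: decompose $R$ into artinian local factors and use the $\mathfrak{m}_i$-adic filtration to see that $[\kappa_i:k]$ divides $\dim_k(R_i)$. The only cosmetic difference is that the paper first selects a summand with $3\nmid\dim_k(R_1)$ and then applies the divisibility, whereas you sum over all factors and argue by contradiction.
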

\begin{proof} Note that $R$ is a direct sum of Artinian local rings, we may pick a summand $(R_1,\mathfrak{m}_1)$ such that $3\nmid\mathrm{dim}_k(R_1)$. Note $R_1$ has a filtration of $R_1/\mathfrak{m}_1$-modules, so $[\kappa(\mathfrak{m}_1)\colon k]$ divides $\mathrm{dim}_k(R_1)$. Then $3\nmid\mathrm{dim}_k(R_1)$ implies $3\nmid[\kappa(\mathfrak{m}_1)\colon k]$.
\end{proof}

\begin{lemma}\label{hensel}
Let $(R,\mathfrak{m},\kappa)$ be a Henselian local ring, let $\mathcal{X}$ be a flat $R$-scheme, let $X$ be its fiber over $\mathfrak{m}$, then the image of specialization map $\mathcal{X}(R)\to X(\kappa)$ contains all smooth $\kappa$-points of $X$. \end{lemma}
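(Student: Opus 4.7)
The plan is to reduce to the case of an étale morphism onto affine space and then invoke the defining property of a Henselian local ring. Fix a smooth $\kappa$-point $x \in X(\kappa)$, viewed as a closed point of $\mathcal{X}$ via the closed immersion $X \hookrightarrow \mathcal{X}$. The goal is to produce a section $s \colon \mathrm{Spec}(R) \to \mathcal{X}$ whose reduction modulo $\mathfrak{m}$ is $x$.

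First I would upgrade the smoothness hypothesis on $x$ from a statement about the fiber $X/\kappa$ to a statement about the total space $\mathcal{X}/R$. Since $\mathcal{X}$ is flat over $R$ and the fiber $X = \mathcal{X} \times_R \kappa$ is smooth over $\kappa$ at $x$, the fibral criterion for smoothness (e.g.\ \cite[Tag 039E]{stacks} in spirit, or EGA IV\textsubscript{4}, 17.8.1) implies that $\mathcal{X} \to \mathrm{Spec}(R)$ is smooth at the point $x$. Shrinking $\mathcal{X}$, I may assume $\mathcal{X}$ is smooth over $R$.

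Next I would use the local structure of smooth morphisms: after passing to an affine open neighborhood $\mathcal{U} \subset \mathcal{X}$ of $x$, there exists an étale $R$-morphism $\pi \colon \mathcal{U} \to \mathbb{A}^n_R$, where $n$ is the relative dimension at $x$. The point $\pi(x) \in \mathbb{A}^n_\kappa(\kappa)$ is given by some tuple $(\overline{a}_1, \dots, \overline{a}_n) \in \kappa^n$; choose arbitrary lifts $a_i \in R$ of $\overline{a}_i$. This produces an $R$-point $\sigma \colon \mathrm{Spec}(R) \to \mathbb{A}^n_R$ whose closed fiber is $\pi(x)$.

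Finally, consider the base change $\mathcal{U}_\sigma := \mathcal{U} \times_{\mathbb{A}^n_R, \sigma} \mathrm{Spec}(R)$. It is étale over $\mathrm{Spec}(R)$, and by construction the closed fiber $\mathcal{U}_\sigma \times_R \kappa$ contains the $\kappa$-point corresponding to $x$. Since $R$ is Henselian, every étale $R$-scheme with a section over the closed point admits a (unique) section over $R$ refining it; equivalently, idempotent-like lifting applies to the finite étale local factor through which $x$ factors. Pulling this section back to $\mathcal{U}$ and then including into $\mathcal{X}$ yields the desired element of $\mathcal{X}(R)$ specializing to $x$.

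The only real content is the final Henselian lifting step, which I expect to be the main obstacle to state cleanly; everything before it is formal manipulation of the smoothness hypothesis. In practice this last step is just a repackaging of the definition of a Henselian local ring (lifting sections along étale neighborhoods of $\kappa$-points), so the proof essentially reduces to correctly setting up the étale chart.
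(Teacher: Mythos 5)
Your argument is correct and is essentially a self-contained proof of the statement the paper simply cites: the paper's proof consists of invoking EGA IV, 18.5.17 together with openness of the smooth locus, and your chain (fibral criterion for smoothness to upgrade smoothness of the fiber at $x$ to smoothness of $\mathcal{X}/R$ at $x$, local structure of smooth morphisms as \'etale over $\mathbb{A}^n_R$, arbitrary lift of the image point, then the Henselian lifting property for \'etale $R$-schemes with a $\kappa$-rational point over the closed point) is exactly how that EGA result is proved. No gaps worth flagging.
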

\begin{proof}This follows from \cite[18.5.17]{EGA4} and openness of smooth loci \cite[01V9]{SP}.
\end{proof}

\begin{lemma}\label{keylem}
Let $X$ be a regular scheme over a field $k$, let $P\subset X$ be a closed point on $X$ of degree $d$, then the Hilbert scheme $\mathrm{Hilb}_{X/k}^{d}$ is smooth at $[P]$.
\end{lemma}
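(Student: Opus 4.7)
The plan is a standard deformation-theoretic computation at $[P]$. The tangent space to $\mathrm{Hilb}^d_{X/k}$ at $[P]$ is canonically identified with $\mathrm{Hom}_{\mathcal{O}_X}(\mathcal{I}_P,\mathcal{O}_P)$, and since $\mathcal{O}_P=\kappa(P)$ is a field every such map factors through $\mathcal{I}_P/\mathcal{I}_P^2 = \mathfrak{m}_P/\mathfrak{m}_P^2$. Because $\mathcal{O}_{X,P}$ is a regular local ring, if $n=\dim\mathcal{O}_{X,P}$ then $\mathfrak{m}_P/\mathfrak{m}_P^2$ is a $\kappa(P)$-vector space of dimension $n$, so $\dim_k T_{[P]}\mathrm{Hilb}^d_{X/k} = nd$.

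The main point is to show that obstructions to deforming $P$ inside $X$ vanish. First I would observe that the closed immersion $P\hookrightarrow X$ is a regular embedding: at the unique point of $P$, the ideal is the maximal ideal of the regular local ring $\mathcal{O}_{X,P}$, hence generated by a regular sequence of length $n$. Standard Hilbert-scheme deformation theory then places every obstruction to lifting a first-order deformation of $[P]$ along a small extension in $\mathrm{Ext}^1_{\mathcal{O}_P}(\mathcal{I}_P/\mathcal{I}_P^2,\mathcal{O}_P)$. But $\mathcal{I}_P/\mathcal{I}_P^2$ is free over $\mathcal{O}_P=\kappa(P)$, and $\kappa(P)$ is a field, so this $\mathrm{Ext}^1$ vanishes.

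Combining the tangent space computation with unobstructedness, the formal smoothness criterion gives $\widehat{\mathcal{O}}_{\mathrm{Hilb}^d_{X/k},[P]}\cong k[\![t_1,\dots,t_{nd}]\!]$, so $\mathrm{Hilb}^d_{X/k}$ is smooth of dimension $nd$ at $[P]$. The only subtlety I anticipate, since the paper works over imperfect $k$, is to keep track that regularity of $X$ at $P$ (not smoothness of $X$ over $k$) is enough: this is exactly what one needs to make $\mathfrak{m}_P\subset\mathcal{O}_{X,P}$ a regular sequence and thus $P\hookrightarrow X$ a regular embedding, which is where the whole argument hinges.
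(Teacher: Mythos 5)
Your proof is correct and follows essentially the same route as the paper: both arguments hinge on the observation that regularity of $\mathcal{O}_{X,P}$ makes $P\hookrightarrow X$ a regular embedding (local complete intersection), after which the standard deformation theory of the Hilbert scheme gives unobstructedness. The paper simply cites \cite[06D9]{SP} for the lci-implies-smooth-point step, whereas you unpack that citation with the explicit tangent-space and obstruction computation.
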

\begin{proof} By regularity of $\mathcal{O}_{X,P}$, the closed immersion $P\to X$ is a local complete intersection. Then $[P]$ corresponds to a smooth point of $\mathrm{Hilb}^d_{X/k}$, because local complete intersection subschemes represent smooth points on Hilbert schemes, see \cite[06D9]{SP}.
\end{proof}

\begin{theorem}\label{Pr}Let $k$ be a field. Let $X\subset\mathbb{P}^3_k$ be a regular cubic surface. If $X$ contains a closed point $P$ of degree $d$ prime to $3$, then $X$ contains a closed point $Q$ of degree $1,4$ or $10$.
\end{theorem}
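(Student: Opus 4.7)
The plan is to reduce Theorem \ref{Pr} to Coray's Theorem \ref{Th} via a Henselian lifting argument that chains together the three preceding lemmas.

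If $\mathrm{char}(k) = 0$ then $k$ is perfect and Theorem \ref{Th} applies directly, so I assume $p = \mathrm{char}(k) > 0$. I would take $R$ to be a Cohen ring of $k$: a complete (hence Henselian) DVR with residue field $k$ and fraction field $K$ of characteristic zero, so in particular perfect. Lift the defining cubic $F$ of $X$ to some $\tilde F \in R[x_0,\dots,x_3]$ such that the generic fiber $\tilde X_K := V(\tilde F) \subset \mathbb{P}^3_K$ is a smooth cubic surface; this is possible because smoothness is open and a sufficiently generic lift suffices.

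To lift $P$ along this family, note that by Lemma \ref{keylem} the Hilbert scheme $\mathrm{Hilb}^d_{X/k}$ is smooth at $[P]$; combined with the lci property of $P \hookrightarrow X$ and flatness of $\tilde X/R$, this promotes to smoothness of the relative Hilbert scheme $\mathrm{Hilb}^d_{\tilde X/R}$ over $R$ at $[P]$. Lemma \ref{hensel} then lifts $[P]$ to an $R$-point, producing a relative closed subscheme $\tilde P \subset \tilde X$, flat of degree $d$ over $R$ and restricting to $P$ on the special fiber. The generic fiber $\tilde P_K$ is an artinian $K$-algebra of dimension $d$; since $3 \nmid d$, Lemma \ref{Lm} extracts a closed point of $\tilde X_K$ of degree prime to $3$. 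Coray's Theorem \ref{Th}, applied to the smooth cubic surface $\tilde X_K$ over the perfect field $K$, then yields a closed point $\tilde Q \subset \tilde X_K$ of degree $e \in \{1, 4, 10\}$.

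To specialize back, take the schematic closure $\overline{\tilde Q} \subset \tilde X$: this is finite flat of degree $e$ over $R$, and its special fiber is a length-$e$ closed subscheme of $X$. One last application of Lemma \ref{Lm} produces a closed point $Q \subset X$ of degree $e'$ with $3 \nmid e'$ and $e' \leq e \leq 10$. The main obstacle I anticipate is ruling out $e' \in \{2, 5, 7, 8\}$ in favor of $e' \in \{1, 4, 10\}$: one approach is to select $\tilde Q$ so that $\overline{\tilde Q}/R$ is unramified (forcing $e' = e$); failing that, one iterates, using that $e' \leq 10$ forces a strict decrease for $d > 10$, combined with an induction on $d$ and direct analysis of the small cases $d \in \{2, 5, 7, 8\}$.
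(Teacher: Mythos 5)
Your proposal is correct and follows essentially the same route as the paper: lift to a Cohen ring $R$ with characteristic-zero fraction field, use the smoothness of $\mathrm{Hilb}^d$ at the lci point $[P]$ together with Henselianness to lift $P$, apply Coray's theorem over the perfect field $K$, specialize back via properness of the Hilbert scheme, and extract a closed point of degree prime to $3$ with Lemma \ref{Lm}. The ``obstacle'' you flag at the end is resolved exactly by your fallback option --- the paper simply iterates the whole lift-descend-specialize cycle, performing one degree-lowering step of Coray's lemma per cycle, so the degree strictly decreases until it lands in $\{1,4,10\}$; the unramifiedness trick is neither needed nor generally arrangeable.
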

\begin{proof} If $k$ has characteristic $0$, this follows from Theorem \ref{Th}. Suppose the characteristic of $k$ is $p$.
We may take a complete discrete valuation ring $R$, such that $p$ is a uniformizer and $R/p\cong k$, see \cite[Tag 0328]{SP}. Let's denote the fraction field of $R$ by $K$. We take any cubic surface $\mathcal{X}/R$ in $\mathbb{P}^3_R$, such that the special fiber is $X/k$.

Given a closed point $P$ on $X$ of degree $d$ prime to $3$. 
We apply Lemma \ref{hensel} and Lemma \ref{keylem}, the $k$-point $[P]$ on the special fiber $\mathrm{Hilb}_{X/k}^d$ lifts to a $R$-point on $\mathrm{Hilb}^d_{\mathcal{X}/R}$.

This $R$-point restricts to a $K$-point $[Q]$ on $\mathrm{Hilb}^d_{X_K/K}$, hence represents a closed subscheme $Z$ of degree $d$ on $X_K$. We may assume $Z$ is a closed point, for otherwise, by Lemma \ref{Lm}, we may take a closed point $Z'\subset Z$ of degree $d'<d$ and $3\nmid d'$, then continue with $Z'$.

If $d\neq 4, 10$, then Theorem \ref{Th} implies existence of a $K$-point of $\mathrm{Hilb}^e_{X_K/K}$, with $e<d$ and $3\nmid e$. Since the Hilbert scheme is proper
, by valuative criteria, this $K$-point extends to an $R$-point of $\mathrm{Hilb}^e_{\mathcal{X}/R}$. Specializing this $R$ point to a $k$-point of $\mathrm{Hilb}^e_{X/k}$, we get a subscheme of degree $e$ on $X$. By Lemma \ref{Lm}, we know $X$ contains a closed point whose degree is no greater than $e$ and prime to $3$. Repeating the argument, we descend $d$ to $1,4$ or $10$.
\end{proof}

\section{Cubic 3-folds and 4-folds}\label{sec4}
Coray's descent argument for cubic surfaces breaks down in higher dimension, as complete intersection curves have too high genus. In this section, we introduce a simple trick which brings down the degree of certain closed points on cubic 3-folds and 4-folds. Namely, we take residue intersection with rational normal curves. The main difficulty is to deal with the case when the geometrical points are not in linearly general position.

\subsection{Rational normal curves} Let $k$ be a field. We recall the fact that for any degree $n+3$ closed point in $\mathbb{P}^n_k$, whose geometric points are in general linear position, there exists a unique genus zero curve passing through it.

\begin{definition}Let $\overline{k}$ be an algebraically closed field. Given a finite set $\Gamma$ of closed points in $\mathbb{P}^r_{\overline{k}}$, we say $\Gamma$ is in linearly general position, if for every proper linear subspace $\Lambda\subset\mathbb{P}_{\overline{k}}^r$, we have $\deg(\Lambda\cap\Gamma)\leq1+\dim\Lambda$.
\end{definition}

Let $P$ be a separable closed point of degree $n+1$ in $\mathbb{P}_k^{n}$, such that its geometric points $\{P_1,\dots P_{n+1}\}$ are in linearly general position. We define the Cremona transformation at $P$, and use it to study the rational normal curves passing through $P$.

Let's assume $P$ lies in the affine piece $\{X_0\neq0\}\subset\mathbb{P}^n_k$.
Denote the coordinates of $P_i$ by $[a_{i,0}\colon\dots\colon a_{i,n}]$, with $a_{i,0}=1$. Let $\mathbf{a}_j$ be the row vector $(a_{i,0},\dots,a_{i,n})$. Let $\mathbf{x}=(X_0,\dots ,X_n)$. Let $A_j=(\mathbf{a}_0^T,\dots,\mathbf{a}_{j-1}^T,\mathbf{x}^T,\mathbf{a}^T_{j+1},\dots,\mathbf{a}_n^T)$
 be the $(n+1)\times(n+1)$ matrix. The points being in linear general position implies $\mathrm{det}(A_{j})\neq 0$. Let $f_i=\sum_{j=0}^na_{ij}\mathrm{det}(A_j)^{-1}$.
\begin{definition}
The Cremona involution at $P$ is defined as the rational map: $$\mathrm{Cr}_{P,k^{\mathrm{sep}}}\colon\mathbb{P}^n_{k^{\mathrm{sep}}}\dashrightarrow\mathbb{P}^n_{k^{\mathrm{sep}}},\ \ \ [x_0\colon\dots\colon x_n]\mapsto[f_0\colon\dots\colon f_n].$$
\end{definition}

\begin{lemma}
The rational map $\mathrm{Cr}_{P,k^{\mathrm{sep}}}$ descends to a rational map $\mathrm{Cr}_{P}$ defined over $k$.
\end{lemma}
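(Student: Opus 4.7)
The plan is to show that the rational map $\mathrm{Cr}_{P,k^{\mathrm{sep}}}$ is equivariant under the absolute Galois group $G := \mathrm{Gal}(k^{\mathrm{sep}}/k)$, and then invoke Galois descent for rational maps between projective spaces. Since $P$ is a separable closed point of degree $n+1$ in $\mathbb{P}^n_k$, its geometric fiber $\{P_0,\ldots,P_n\}$ is a single $G$-orbit: for each $\sigma \in G$ there is a permutation $\pi = \pi_\sigma \in S_{n+1}$ with $\sigma(P_i) = P_{\pi(i)}$, and the normalization $a_{i,0} = 1$ forces $\sigma(a_{i,j}) = a_{\pi(i), j}$ for all $i,j$.

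The central calculation is to determine how the linear forms $\det(A_j)$ transform. Applying $\sigma$ to the entries of $A_j$ permutes the labels of the coordinate columns $\mathbf{a}_i^T$ by $\pi$ while leaving the variable column $\mathbf{x}^T$ fixed at position $j$. A short bookkeeping check shows that the resulting matrix differs from $A_{\pi(j)}$ only by a column permutation of sign $\operatorname{sgn}(\pi)$, giving
\[
  \sigma\bigl(\det A_j\bigr) \;=\; \operatorname{sgn}(\pi)\,\det A_{\pi(j)}.
\]

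Substituting this and the Galois action on $a_{i,j}$ into the defining formula for $f_i$, and then reindexing the sum by $l = \pi(j)$, I expect each coordinate $f_i$ to be multiplied by the same scalar $\operatorname{sgn}(\pi)^{-1}$, independent of $i$. Hence $[\sigma(f_0):\cdots:\sigma(f_n)] = [f_0:\cdots:f_n]$ as rational maps $\mathbb{P}^n_{k^{\mathrm{sep}}} \dashrightarrow \mathbb{P}^n_{k^{\mathrm{sep}}}$, so $\mathrm{Cr}_{P,k^{\mathrm{sep}}}$ is $G$-equivariant and descends to a rational map $\mathrm{Cr}_P$ defined over $k$.

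The main obstacle will be the sign and index bookkeeping in the column-permutation computation and in the ensuing reindexing of the summation: one must verify that the scalar factor really comes out to be the same for every component $f_i$, not merely up to a permutation of coordinates. Conceptually this reflects the fact that the Cremona construction depends only on $P$ as a closed $k$-subscheme (that is, on the unordered set of its geometric points), not on any choice of ordering of $P_0,\ldots,P_n$, which provides a useful sanity check on the direct formula.
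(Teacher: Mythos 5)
Your proposal is correct and takes essentially the same approach as the paper: both establish Galois-equivariance by noting that $\sigma$ acts through a permutation $\pi$ of the points, computing $\sigma(\det A_j)=\mathrm{sgn}(\pi)\det A_{\pi(j)}$, and observing that the resulting common scalar $\mathrm{sgn}(\pi)$ cancels in projective (equivalently, inhomogeneous) coordinates; the paper merely abbreviates the bookkeeping by checking only the generators $(01)$ and $(01\dots n)$ of $S_{n+1}$. (For your reindexing to close up, the formula should be read as $f_i=\sum_{j}a_{j,i}\det(A_j)^{-1}$, i.e.\ summed over the point index, which is consistent with the paper's later reduction to $[X_0\colon\dots\colon X_n]\mapsto[X_0^{-1}\colon\dots\colon X_n^{-1}]$.)
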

\begin{proof} It suffices to show the inhomogeneous coordinates $f_i/f_0$ are $\mathrm{Gal}(k^{\mathrm{sep}}/k)$-invariant. Since $\mathrm{Gal}(k^{\mathrm{sep}}/k)$ acts on $\mathbf{a}_j$ by permutation, it suffices to show the rational function are invariant under the full permutation group $S_{n+1}=\langle(01),(01\dots n)\rangle$. This follows as $(01\dots n)$ map $f_i$ to $f_i$, and $(01)$ map $f_i$ to $-f_i$.
\end{proof}

\begin{lemma}\label{Rnc} The birational map $\mathrm{Cr}_P$ induces a bijection between smooth genus zero curves in $\mathbb{P}^n_k$ passing through $P$, and lines in $\mathbb{P}^n_k$ not meeting the union of all $n-2$ planes determined by $P$, i.e., not meeting $\bigcup_{{I\subset\{1,\dots,n+1\} ,\#|I|=n-1}}\mathrm{span}(P_i|i\in I)$.

\end{lemma}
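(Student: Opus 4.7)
The plan is to reduce to an explicit coordinate calculation over $k^{\mathrm{sep}}$ and then descend by Galois invariance. Since $\mathrm{Cr}_P$ is defined over $k$ by the previous lemma, the base locus $\bigcup_{|I|=n-1}\mathrm{span}(P_i|i\in I)$ is also defined over $k$, and both sides of the claimed bijection carry a natural $\mathrm{Gal}(k^{\mathrm{sep}}/k)$-action whose fixed points are the $k$-rational objects. Hence it suffices to construct a Galois-equivariant bijection over $k^{\mathrm{sep}}$.

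After base change to $k^{\mathrm{sep}}$, I would choose coordinates sending $P_1,\dots,P_{n+1}$ to the standard coordinate points $e_0,\dots,e_n$. Under this change, $\mathrm{Cr}_P$ becomes the classical standard Cremona involution
\[
[x_0:\cdots:x_n]\mapsto\bigl[\textstyle\prod_{j\neq 0}x_j:\cdots:\prod_{j\neq n}x_j\bigr],
\]
and the base locus becomes the union of coordinate $(n-2)$-planes $V_{ij}=\{x_i=x_j=0\}$. A rational normal curve through $e_0,\dots,e_n$ then admits a parametrization
\[
\gamma(t)=\bigl[c_0\textstyle\prod_{i\neq 0}(t-t_i):\cdots:c_n\prod_{i\neq n}(t-t_i)\bigr]
\]
for distinct $t_i$ and nonzero $c_j$; the shape is forced because each coordinate has degree $n$ and must vanish at the $n$ parameters $t_i$ with $i\neq j$ in order that $\gamma(t_i)=e_i$. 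Applying Cremona and cancelling the common factor $\prod_i(t-t_i)^{n-1}$ from each coordinate produces the line $L:[c'_0(t-t_0):\cdots:c'_n(t-t_n)]$; distinctness of the $t_i$ ensures no pair of its coordinate linear forms share a root, so $L$ misses every $V_{ij}$.

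Conversely, any line avoiding the base locus admits a parametrization $[\alpha_0(t-s_0):\cdots:\alpha_n(t-s_n)]$ with pairwise distinct $s_i$, and applying Cremona yields a non-degenerate rational curve of degree $n$ through the $e_i$, i.e.\ a rational normal curve (the required linear independence of the polynomials $\prod_{j\neq i}(t-s_j)$ is a routine check by evaluation at $t=s_i$). Since $\mathrm{Cr}_P$ is an involution, the two constructions are mutually inverse, producing a bijection over $k^{\mathrm{sep}}$; it is Galois-equivariant by construction, so it descends to $k$. The main obstacles I anticipate are two bookkeeping issues: the coordinate change in the setup is only defined over $k^{\mathrm{sep}}$, so Galois equivariance must be read off from the intrinsic $k$-rationality of $\mathrm{Cr}_P$ rather than from the formulas; and identifying ``smooth genus zero curves through $P$'' with rational normal curves requires noting that the linear general position hypothesis forces any such curve to be non-degenerate, hence of degree at least $n$, and only the degree-$n$ case is picked up by the bijection with lines.
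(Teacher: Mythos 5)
Your proposal is correct and follows essentially the same route as the paper: Galois descent to $k^{\mathrm{sep}}$, a projective change of coordinates reducing $\mathrm{Cr}_P$ to the standard Cremona involution, and then a direct verification. The paper compresses the final step to ``check directly,'' whereas you supply the explicit parametrization argument (and rightly flag that only the degree-$n$ curves participate in the bijection), so the two proofs agree in substance.
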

\begin{proof} By Galois descent, it suffices to show the claim after base change to $k^{\mathrm{sep}}$. Since $\mathrm{PGL}_{n+1}$ action preserves degree, we may assume $\mathrm{Cr}_P$ has the form $[X_0\colon\dots\colon X_n]\mapsto[X_0^{-1}\colon\dots\colon X_n^{-1}]$, then check directly.
\end{proof}

\begin{proposition}\label{Rnc3} Let $P$ be a degree $n+3$ closed point in $\mathbb{P}^n_k$, geometrically in linearly general position, then there exists a unique smooth genus zero curve passing $P$.
\end{proposition}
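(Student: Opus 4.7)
The plan is to reduce both existence and uniqueness to the elementary fact that a line in projective space is determined by two points, using the Cremona transformation of Lemma \ref{Rnc}.

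First I would base-change to $k^{\mathrm{sep}}$, since uniqueness of the rational normal curve there will force Galois invariance and hence descent to $k$. Label the (separable) geometric points as $P_1,\dots,P_{n+3}$, and set $P' = P_1+\dots+P_{n+1}$; the hypothesis of linearly general position for the full configuration restricts to $P'$, so the Cremona transformation $\mathrm{Cr}_{P'}$ is defined. By Lemma \ref{Rnc}, smooth genus zero curves through $P'$ correspond bijectively to lines in $\mathbb{P}^n_{k^{\mathrm{sep}}}$ avoiding the union $Z$ of the $(n-2)$-planes $\mathrm{span}(P_i\mid i\in I)$ with $I\subset\{1,\dots,n+1\}$, $|I|=n-1$. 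Linearly general position of the full set of $n+3$ points ensures that neither $P_{n+2}$ nor $P_{n+3}$ lies on the indeterminacy locus of $\mathrm{Cr}_{P'}$, so the images $Q_{n+2}$ and $Q_{n+3}$ are well-defined and (since $\mathrm{Cr}_{P'}$ is injective on its domain) distinct. A smooth rational normal curve through all of $P$ thus corresponds, under the bijection of Lemma \ref{Rnc}, to a line $L$ containing both $Q_{n+2}$ and $Q_{n+3}$, and there is a unique such $L$.

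The main technical point is to verify that this line $L$ actually avoids $Z$, so that it corresponds via Lemma \ref{Rnc} to a genuine smooth rational normal curve through $P$. Using the $\mathrm{PGL}_{n+1}$-equivariance employed in the proof of that lemma, I may assume $P_1,\dots,P_{n+1}$ are the standard coordinate points and $\mathrm{Cr}_{P'}$ is the standard Cremona $[x_0\colon\cdots\colon x_n]\mapsto[x_0^{-1}\colon\cdots\colon x_n^{-1}]$, so that $Z$ becomes the union of the coordinate $(n-2)$-planes $\{x_i=x_j=0\}$. If $L$ met such a plane $\{x_i=x_j=0\}$, then pulling this incidence back through the involution $\mathrm{Cr}_{P'}$ would yield a linear relation witnessing that $P_{n+2}$, $P_{n+3}$ together with the $(n-1)$-element subset $\{P_\ell : \ell\in\{1,\dots,n+1\}\setminus\{i,j\}\}$ fail to be linearly independent, contradicting linearly general position of $\{P_1,\dots,P_{n+3}\}$. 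This explicit coordinate check is where I expect the only real work to sit; everything else is formal.

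Having obtained a unique smooth rational normal curve $C_{k^{\mathrm{sep}}}\subset\mathbb{P}^n_{k^{\mathrm{sep}}}$ through $P_{k^{\mathrm{sep}}}$, its uniqueness forces $\mathrm{Gal}(k^{\mathrm{sep}}/k)$-invariance, so by Galois descent it arises from a unique closed subscheme $C\subset\mathbb{P}^n_k$, which is smooth of genus zero and contains $P$. Uniqueness over $k$ is immediate from uniqueness over $k^{\mathrm{sep}}$.
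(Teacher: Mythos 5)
Your proof is correct and follows essentially the same route as the paper: base-change to $k^{\mathrm{sep}}$, apply the Cremona transformation at $n+1$ of the points, reduce to the unique line through the images of the remaining two, and descend by uniqueness. The only difference is that you make explicit the verification (which the paper leaves implicit) that the line through $Q_{n+2}$ and $Q_{n+3}$ avoids the excluded $(n-2)$-planes, via the observation that meeting $\{x_i=x_j=0\}$ would force the vanishing of the minor $a_ib_j-a_jb_i$ and hence a linear dependence among $P_{n+2}$, $P_{n+3}$ and the $n-1$ remaining coordinate points, contradicting linearly general position.
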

\begin{proof}
After base change to $k^{\mathrm{sep}}$, we may choose $n+1$ geometric points $P'\subset (P)_{k^{\mathrm{sep}}}$ and perform the Cremona transformation at $P'$. By Lemma \ref{Rnc}, there is a unique genus zero curve passing through $P$. The uniqueness implies that the curve descends to $k$. \end{proof}

\subsection{Generalization and specialization}
We deal with the case when the geometric points are not in linear general position. We generalize the points to linearly general position, then specialize the rational normal curve back.

\begin{lemma}\label{gen} Let $n$ be a positive integer, let $K/k$ be a separable field extension of degree $d$. Then there exists a closed point $P\in\mathbb{A}_k^n$ with $\kappa(P)\cong K$ such that $P_{\overline{k}}\subset \mathbb{A}^n_{\overline{k}}$ consists of $d$ points in linearly general position.
\end{lemma}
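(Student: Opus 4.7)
The plan is explicit: use the primitive element theorem to write $K=k(\alpha)$, and then take $P$ to have coordinates given by consecutive powers of $\alpha$, padded with zeros if $n$ is larger than needed. Viewed projectively, the geometric conjugates of $P$ will lie on a rational normal curve inside a coordinate subspace, so Vandermonde determinants will immediately yield linearly general position.

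More concretely, by separability of $K/k$ write $K=k(\alpha)$, let $\alpha_1,\dots,\alpha_d\in\overline{k}$ be the Galois conjugates of $\alpha$, and set $m=\min(n,d-1)$. Define $P\in\mathbb{A}^n_k$ by the $k$-algebra homomorphism $k[x_1,\dots,x_n]\to K$ sending $x_i\mapsto\alpha^i$ for $1\leq i\leq m$ and $x_i\mapsto 0$ for $i>m$. Since $x_1\mapsto\alpha$ already generates $K$ over $k$, we have $\kappa(P)=K$, so $P$ has degree $d$ with geometric points $Q_j=(\alpha_j,\alpha_j^2,\dots,\alpha_j^m,0,\dots,0)\in\mathbb{A}^n_{\overline{k}}$ for $j=1,\dots,d$.

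To verify linearly general position, embed $\mathbb{A}^n\hookrightarrow\mathbb{P}^n$ via $[1:x_1:\cdots:x_n]$, so $Q_j$ becomes $[1:\alpha_j:\alpha_j^2:\cdots:\alpha_j^m:0:\cdots:0]$. Unwinding the definition, it suffices to show that every subset of size $s_0:=\min(d,n+1)$ of the $Q_j$'s spans an $(s_0-1)$-plane in $\mathbb{P}^n$; the corresponding condition for smaller subsets is then automatic, since points in a $j$-plane stay in that same plane after adding more of the $Q_j$'s. Now $m+1=\min(n+1,d)=s_0$, so restricting any $s_0$ of the $Q_j$'s to their first $s_0$ homogeneous coordinates yields the $s_0\times s_0$ Vandermonde matrix $(\alpha_{j_l}^{\,i})_{0\leq i\leq s_0-1,\;1\leq l\leq s_0}$, whose determinant $\prod_{l<l'}(\alpha_{j_{l'}}-\alpha_{j_l})$ is nonzero by distinctness of the conjugates. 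Hence any $s_0$ of the $Q_j$'s are linearly independent in $\mathbb{P}^n$, which is linear general position.

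I do not foresee a genuine obstacle. The only bookkeeping point is the unified treatment of the regimes $n\leq d-1$ (where the $Q_j$'s lie on an honest affine rational normal curve in $\mathbb{A}^n$) and $n\geq d$ (where they lie on such a curve inside the coordinate subspace $x_{m+1}=\cdots=x_n=0$), which the truncation $m=\min(n,d-1)$ absorbs cleanly.
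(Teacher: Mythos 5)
Your proof is correct and follows essentially the same route as the paper: take a primitive element $\alpha$ of $K/k$, place the point on a rational normal curve via its powers, and conclude linear general position from nonvanishing Vandermonde determinants. If anything, your version is more careful than the paper's, which literally writes the affine point as $(1,\alpha,\dots,\alpha^{n-1})$ --- so that after projectivizing, the first two homogeneous coordinates coincide and all $d$ conjugates lie on the hyperplane $X_0=X_1$, a degeneracy when $d>n$ --- whereas your truncation $(\alpha,\dots,\alpha^{m},0,\dots,0)$ with $m=\min(n,d-1)$, together with the explicit reduction to checking subsets of size $\min(d,n+1)$, gives the intended nondegenerate configuration.
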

\begin{proof} Since $K/k$ is separable, there exists an element $\alpha\in K$ such that $K=k[\alpha]$. Let $f_{\alpha}(t)=t^d+a_1t^{d-1}+\dots+a_d$ be the minimal polynomial of $\alpha$. Let $\{\alpha=\alpha_1,\alpha_2,\dots,\alpha_d\}$ be roots of $f_\alpha(t)$.

Consider the point $P\colon\mathrm{Spec}(K)\to\mathbb{A}^n_k$ given by $(1,\alpha,\dots,\alpha^{n-1})$. The geometric points lying over $P$ are $\{P_i=(1,\alpha_i,\alpha_i^2,\dots,\alpha_i^{n-1})\}_{i=1}^d\subset\mathbb{A}^n_{\overline{k}}$. These points are in linearly general position, by properties of the Vandermonde matrices.
\end{proof}

\begin{lemma}\label{Qing}Let $R$ be a discrete valuation ring. Let $S=\mathrm{Spec}(R)$, with generic point $\eta$ and closed point $s$. Let $X_\eta\subset \mathbb{P}^n_{\kappa(\eta)}$ be a geometrically rational curve. Let $\mathcal{X}\subset\mathbb{P}^n_S$ be the scheme theoretic closure of $X_\eta$. Then the irreducible components of the special fiber $X_{s}$ are geometrically rational curves.
\end{lemma}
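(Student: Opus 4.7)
The plan is to parametrize the generic fiber by a projective line, spread this parametrization out to a rational map $\mathbb{P}^1_S\dashrightarrow\mathbb{P}^n_S$, resolve its indeterminacy on the special fiber by blow-ups, and then transfer rationality of the resulting fiber components back to $\mathcal{X}_s$.

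First I would reduce to the case where the normalization of $X_\eta$ is $\mathbb{P}^1_{\kappa(\eta)}$. Since $X_\eta$ is geometrically integral of geometric genus zero, its normalization is a Brauer--Severi curve over $K:=\kappa(\eta)$, split by some finite separable extension $K'/K$. Let $R'$ be the localization at a prime above $\mathfrak{m}$ of the integral closure of $R$ in $K'$, a DVR finite over $R$ with residue field $k'$ and closed point $s'$. The map $\mathcal{X}':=\mathcal{X}\times_R R'\to\mathcal{X}$ is finite and surjective, so the induced map $\mathcal{X}'_{s'}\to\mathcal{X}_s$ is finite and surjective as well. Because geometrically rational curves are closed under finite surjective images (by L\"uroth's theorem after base change to the algebraic closure), it suffices to prove the lemma with $(\mathcal{X},R)$ replaced by $(\mathcal{X}',R')$.

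Next I would spread out the parametrization. The normalization is a finite morphism $\phi\colon\mathbb{P}^1_{K'}\to X_{\eta'}\hookrightarrow\mathbb{P}^n_{K'}$, given by an $(n+1)$-tuple of homogeneous forms of some degree $d$ in $K'[u,v]$. After clearing denominators these forms lie in $R'[u,v]$ with coefficient ideal equal to $R'$, producing a rational map $\Phi\colon\mathbb{P}^1_{R'}\dashrightarrow\mathbb{P}^n_{R'}$ whose indeterminacy locus is a finite set of closed points on $\mathbb{P}^1_{k'}$ (on the generic fiber the forms have no common zero). Resolve the indeterminacy by a finite sequence of blow-ups centered at closed points of a regular arithmetic surface, yielding a regular proper $R'$-scheme $\mathcal{Y}$ and an everywhere defined morphism $\widetilde{\Phi}\colon\mathcal{Y}\to\mathbb{P}^n_{R'}$. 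Every irreducible component of $\mathcal{Y}_{s'}$ is either the strict transform of $\mathbb{P}^1_{k'}$ or an exceptional divisor of such a blow-up, so each is a smooth projective genus-zero curve over a finite extension of $k'$, and in particular geometrically rational.

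Finally, $\widetilde{\Phi}$ is proper, restricts on the generic fiber to $\phi$ with image $X_{\eta'}$, and has irreducible source; its image is therefore an irreducible closed subscheme of $\mathbb{P}^n_{R'}$ with generic fiber $X_{\eta'}$, and must coincide topologically with the flat closure $\mathcal{X}'$ of $X_{\eta'}$. Thus $\widetilde{\Phi}$ factors as a proper surjection $\mathcal{Y}\to\mathcal{X}'$, whose restriction $\mathcal{Y}_{s'}\to\mathcal{X}'_{s'}$ is again a proper surjection. Every irreducible component of $\mathcal{X}'_{s'}$ is therefore the image of an irreducible component of $\mathcal{Y}_{s'}$ under a generically finite morphism, and so is geometrically rational. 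The main obstacle I anticipate is the indeterminacy resolution step --- verifying that finitely many blow-ups at closed points (possibly infinitely near) truly clear the base locus for the given tuple of sections --- but this is standard for rational maps out of regular surfaces, and once it is in hand the remaining properness-and-finiteness arguments are formal.
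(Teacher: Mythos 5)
Your proof is correct and follows essentially the same route as the paper: spread the $\mathbb{P}^1$-parametrization of the generic fiber out to a rational map $\mathbb{P}^1_S\dashrightarrow\mathbb{P}^n_S$, resolve its indeterminacy by a finite sequence of blow-ups at closed points of the special fiber, observe that every component of the resulting special fiber is a genus-zero curve, and conclude via L\"uroth that each component of $X_s$, being dominated by such a curve, is geometrically rational. The only difference is cosmetic: you carry out the reduction to a split parametrization via a finite extension $R'/R$ and the normalization map in more detail than the paper, which simply invokes flat base change and assumes $X_\eta\cong\mathbb{P}^1_\eta$.
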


\begin{proof}By L\"uroth's theorem, it suffices to show the irreducible components in $X_s$ are dominated by geometrically rational curves. Note that taking scheme theoretic closure commutes with flat base change \cite[0CMK]{SP}, 
we assume there is an isomorphism $f\colon X_\eta\cong\mathbb{P}^1_\eta$, which induces a rational map $f\colon\mathbb{P}^1_{S}{\dashrightarrow}\mathbb{P}^n_{S}$. By valuative criteria, the birational map has indeterminacy loci in codimension $2$. Thus there exists a subscheme $W\subset\mathbb{P}^1_{s}$, such that the blow-up $\pi\colon \mathrm{Bl}_W(\mathbb{P}^1_{S})\to\mathbb{P}^1_{S}$ resolves the indeterminacy of $f$, see \cite[II.7,17.3]{H}. By \cite[9.2.2]{Liu}, the morphism $\pi$ factors into a finite sequence of blow ups at closed points. Hence $X_s$ is dominated by a union of rational curves.
\end{proof}

\subsection{Intersection with rational normal curves}
\begin{theorem}\label{Cubic4}Let $k$ be a field. Let $X$ be a smooth cubic hypersurface in $\mathbb{P}_k^5$ (resp. $\mathbb{P}^4_k$), suppose $X$ has a closed point $x$ of degree $8$ (resp. $7$), then $X$ has a closed point of degree $1,2,4,5$ or $7$ (resp. $1,2,4$ or $5$).
\end{theorem}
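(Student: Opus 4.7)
The plan is to perform residual intersection with a rational normal curve. Write $n \in \{4,5\}$ and $d = n+3 \in \{7,8\}$ for the degree of $x$. Since a rational normal curve in $\mathbb{P}^n_k$ has degree $n$, B\'ezout gives that its proper intersection with the cubic $X$ has length $3n$; removing $x$ leaves a residual of length $2n-3 \in \{5,7\}$, coprime to $3$. Lemma~\ref{Lm} will then produce a closed point of degree at most $2n-3$ coprime to $3$, which is precisely the list in the statement.

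\emph{Linearly general case.} When the geometric points of $x$ are in linearly general position, I apply Proposition~\ref{Rnc3} to get a unique smooth rational normal curve $C/k$ through $x$. If $C \not\subset X$, the residual divisor on the smooth curve $C$ has degree $2n-3$ and Lemma~\ref{Lm} concludes. If instead $C \subset X$, then $C$ is a smooth geometrically rational $k$-curve containing a closed point of degree $d$; since anisotropic conics over $k$ carry only even-degree closed points, the odd case $d=7$ forces $C \cong \mathbb{P}^1_k$ and yields a $k$-point of $X$, and for $d=8$ the curve $C$ still carries a closed point of degree at most $2$.

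\emph{General case, via deformation.} In general I would first reduce to $\mathrm{char}(k) = 0$ by lifting $(X,x)$ to characteristic zero along a complete mixed-characteristic DVR, using Lemmas~\ref{hensel} and~\ref{keylem} as in the proof of Theorem~\ref{Pr}. Then I would deform $x$ within $X$: by Lemma~\ref{keylem}, $[x]$ is smooth on $H := \mathrm{Hilb}^d_{X/k}$ and so lies on a unique irreducible component. The locus $U \subset H$ of subschemes in geometric linearly general position is open, and a dimension count using $\dim X \geq 3$ shows $U$ is non-empty on the component through $[x]$. Thus one can choose a DVR $R$ with residue field $k$ and an $R$-point of $H$ with closed fibre $[x]$ and generic fibre in $U$; writing $K$ for the fraction field, Proposition~\ref{Rnc3} then yields a rational normal curve $C_K \subset \mathbb{P}^n_K$ through the length-$d$ subscheme $x_K \subset X_K$.

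If $C_K \not\subset X_K$, the residual $Z_K := (C_K \cap X_K) - x_K$ is a degree-$(2n-3)$ subscheme of $X_K$; its scheme-theoretic closure $\mathcal{Z} \subset \mathbb{P}^n_R$ is flat over $R$ (by torsion-freeness) and contained in $X_R$, so the special fibre $\mathcal{Z}_0 \subset X$ has length $2n-3$ and Lemma~\ref{Lm} concludes. The main obstacle will be the degenerate case $C_K \subset X_K$: here the scheme-theoretic closure $\mathcal{C} \subset \mathbb{P}^n_R$ lies in $X_R$, and by Lemma~\ref{Qing} the special fibre $\mathcal{C}_0 \subset X$ is a degree-$n$ curve whose irreducible components are geometrically rational over $k$. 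The normalization of any such component is a smooth $k$-conic, which carries a closed point of degree at most $2$, and this descends to the desired low-degree point on $X$.
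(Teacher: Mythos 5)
Your treatment of the linearly general case and of the sub-case $C_K\not\subset X_K$ is sound and essentially the paper's (the residual on the rational normal curve has length $2n-3$ prime to $3$, the flat closure over the DVR preserves that length, and Lemma \ref{Lm} finishes). Your route to the non-general case is genuinely different from the paper's: you deform $x$ \emph{inside} $X$ through the smooth point $[x]$ of the Hilbert scheme (Lemma \ref{keylem}), whereas the paper moves $x$ inside $\mathbb{P}^n$ along an explicit pencil of configurations (Lemma \ref{gen}), not staying on $X$, and then specializes the rational normal curve back to $t=1$. Your variant has a real advantage — flatness of $\mathcal{C}$ over $R$ pins down the degree and arithmetic genus of the limit curve, while the paper has to allow $\deg C_1\le 5$ and run a case division on that degree — so this part of the proposal is a legitimate and arguably cleaner alternative.

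The gap is in the final sub-case $C_K\subset X_K$. You claim that every irreducible component of $\mathcal{C}_0$ has normalization a smooth conic over $k$, hence a closed point of degree at most $2$. Lemma \ref{Qing} does not give this: a $k$-irreducible component $D$ of $\mathcal{C}_0$ may be geometrically reducible, i.e.\ a union of $r$ Galois-conjugate rational curves. Its normalization is then a smooth genus-zero curve over the degree-$r$ extension $k'=H^0(\widetilde{D},\mathcal{O})$, and the closed points it supplies have degree $r$ or $2r$ over $k$; for $r=3$ these are $3$ and $6$, both divisible by $3$ and useless, and for $r=4,5$ one can land on $8$ or $10$, which are not on your list. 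This is exactly the difficulty the paper has to confront: in its degenerate case it explicitly rules out the configuration of three conjugate rational curves by a Galois-orbit count on the eight (resp.\ seven) geometric points of $x$ (transitivity forces the points to be equidistributed among the conjugate components or all to be nodes, forcing $3\mid 8$). Your write-up contains no such argument, and it also does not address the possibility that $\mathcal{C}_0$ is non-reduced. The step can probably be repaired — for instance using that $\mathcal{C}_0$ is connected of arithmetic genus $0$, so that its geometric components form a tree whose Galois-invariant center yields either a geometrically irreducible $k$-component or a low-degree point, or by redoing the paper's orbit count on the points of $x$ — but as written this case does not go through.
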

\begin{proof}Let's deal with the case of cubic $4$-fold, the case of $3$-fold is the same. We assume $\kappa(x)/k$ is separable, otherwise we may reduce to $\mathrm{char}(k)=0$ case by the previous lifting trick.
Let $P_1,\dots,P_8$ be the geometric points lying over $x$.

If $\{P_i\}_{i=1}^8$ are in linearly general position, by Lemma \ref{Rnc}, there exists a rational normal curve $C$ passing through $x$.
If $C\nsubseteq X$, then $C\cap X$ is a zero-cycle of degree $15$. Since the $P_i$ are Galois conjugate, the intersection $C\cap X$ has to be reduced at $P_i$. By Lemma \ref{Lm}, the complement of $x$ in $C\cap X$ gives a required closed point. If $C\subseteq X$, we may directly take intersection of $C$ with a hyperplane in $\mathbb{P}^5_k$.

If $\{P_i\}_{i=1}^8$ are not in linearly general position, by Lemma \ref{gen}, we pick a degree $8$ closed point $x'$ in $\mathbb{P}^5_k$, geometrically in linearly general position. Let $x',x$ both lie in some affine chart in $\mathbb{A}^5_k$, with coordinates $x=(c_1,\dots,c_5)$, and $x'=(c_1',\dots, c_5')$, where $c_i,c_i'\in k^{\mathrm{sep}}$.

Consider the morphism of $k$-schemes $$\phi\colon\mathbb{A}^1_{k^{\mathrm{sep}}}\to\mathbb{A}^5_k\times\mathbb{A}^1_k\subset\mathbb{P}^5_k\times\mathbb{A}^1_k,$$ $$k^{\mathrm{sep}}[t]\leftarrow k[t_1,\dots t_5,s]$$ given by $(t_1,\dots,t_5,s)\mapsto(tc_1+(1-t)c_1',\dots,tc_5+(1-t)c_5',t)$. Let $Z=\mathrm{Im}(\phi)$. Since being in linearly general position is an open condition, take the generic fiber of $Z\to\mathbb{A}^1_k$, we get a zero cycle $x^*$ of degree $8$ in $\mathbb{P}^5_{k(t)}$, geometrically in linearly general position. By Proposition \ref{Rnc3}, there exists a unique rational normal curve $C'\subset\mathbb{P}^5_{k(t)}$ passing $x^*$.
Take the closure $\overline{C'}$ of $C'$ in $\mathbb{P}^5_{\mathbb{A}^1_k}$, the reduced subscheme $C_1=(\overline{C'}|_{t=1})_{\mathrm{red}}$ passes through $x$, with $\deg (C_1)\leq5$.

\begin{enumerate}[leftmargin=*]
\item  If $C_1\cap X$ is a zero-dimensional. The corresponding zero cycle has degree $9,12$ or $15$ ($\mathrm{deg}(C_1)=3,4,5$). The complement of $x$ in $C_1\cap X$ gives zero cycle of degree $1$ or $4$ or $7$ on $X$.
\item If some components of $C_1$ is contained in $X$.
Let $C_2$ be the closure of such components.
\begin{itemize}[leftmargin=0.1cm]
\item If $\mathrm{deg}(C_2)\neq 3$, we intersect $C_2$ with a general $k$-hyperplane.
\item
If $\mathrm{deg}(C_2)=3$, note by Lemma \ref{Qing}, its geometric irreducible components are rational curves.  Either $C_2$ contains a $k$-rational curve of degree $1$ or $C_2$ is a nodal curve consisting of $3$ Galois conjugate rational curves. The second case cannot happen, as Galois transitivity implies all the points $\{P_i\}_{i=1}^8$ lie in the smooth locus or are the nodes, which forces the number of points to be a multiple of $3$.
\end{itemize}
\end{enumerate}
\end{proof} 
\section{Rational map between symmetric products}\label{sec5}
We reinterpret Theorem \ref{Pr} in terms of rational maps between symmetric products of cubic surfaces.

\begin{proposition} Let $k$ be a field. Let $X\subset\mathbb{P}^3_k$ be a smooth cubic surface. For any positive integer $d$ such that $3\nmid d$, there exists a rational map $\mathrm{Sym}^d(X)\dashrightarrow\mathrm{Sym}^e(X)$ with $e=1,4\ \mathrm{or}\ 10.$
\end{proposition}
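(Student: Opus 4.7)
The plan is to translate Theorem \ref{Pr} into the language of rational maps between symmetric products by passing to the function field $K=k(\mathrm{Sym}^d(X))$ and using the tautological zero-cycle living there. The key observation that makes the translation work is that a rational map $\mathrm{Sym}^d(X)\dashrightarrow\mathrm{Sym}^e(X)$ over $k$ is the same datum as a $k$-morphism $\mathrm{Spec}(K)\to\mathrm{Sym}^e(X)$, which in turn corresponds, via the moduli description of the symmetric product, to an effective zero-cycle of degree $e$ on the base change $X_K$. So the task reduces to producing such a cycle starting from the universal one of degree $d$.

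First I would unwind the tautological map. The generic-point inclusion $\mathrm{Spec}(K)\hookrightarrow\mathrm{Sym}^d(X)$ is a $k$-morphism, which by the functor of points of $\mathrm{Sym}^d(X)$ corresponds to an effective zero-cycle $Z=\sum_i n_iP_i$ of degree $d$ on $X_K$. Since $3\nmid d=\sum_i n_i\deg(P_i)$, at least one summand $n_i\deg(P_i)$ is coprime to $3$, so some $P_i$ is a closed point of $X_K$ of degree prime to $3$.

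Next I would feed this into Theorem \ref{Pr} applied to $X_K\subset\mathbb{P}^3_K$. The base change of the smooth cubic surface $X/k$ along the field extension $K/k$ is again a smooth, in particular regular, cubic surface in $\mathbb{P}^3_K$, so the hypotheses of Theorem \ref{Pr} are met. The theorem supplies a closed point $Q\subset X_K$ of degree $e\in\{1,4,10\}$. Viewing $Q$ as an effective zero-cycle of degree $e$ on $X_K$ and invoking the moduli description of $\mathrm{Sym}^e(X)$ in the reverse direction yields the $k$-morphism $\mathrm{Spec}(K)\to\mathrm{Sym}^e(X)$, equivalently the desired rational map $\mathrm{Sym}^d(X)\dashrightarrow\mathrm{Sym}^e(X)$.

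No new geometric obstacle appears at this stage: Theorem \ref{Pr} carries all the content, and the only step worth stating carefully is the identification, used in both directions above, of $k$-morphisms $\mathrm{Spec}(K)\to\mathrm{Sym}^m(X)$ with effective zero-cycles of degree $m$ on $X_K$.
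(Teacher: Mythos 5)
Your overall strategy is exactly the paper's: pass to the function field $K$ of $\mathrm{Sym}^d(X)$, extract a degree-$d$ point of $X_K$ from the universal family, apply Theorem \ref{Pr} to get a closed point of degree $e\in\{1,4,10\}$, and convert that back into a rational map to $\mathrm{Sym}^e(X)$. However, the one step you single out as needing care --- the identification of $k$-morphisms $\mathrm{Spec}(K)\to\mathrm{Sym}^m(X)$ with effective zero-cycles of degree $m$ on $X_K$, ``used in both directions'' --- is false as a general principle, and the paper itself exhibits a counterexample in Section \ref{sec5}: over $k=\mathbb{F}_2(s,t)$ there is a $k$-point of $\mathrm{Sym}^2(\mathbb{A}^2_k)$ whose fiber in the universal family is a reduced subscheme of length $4$, not a cycle of degree $2$. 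Since the proposition is stated over an arbitrary field, $K$ will be imperfect in positive characteristic and the symmetric product does not have the naive moduli description you invoke (its points correspond to Rydh's ``quasi-integral'' cycles, cf.\ Proposition \ref{cycle}).

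Both uses can be repaired, and the paper's proof shows how. In the forward direction, one does not appeal to a moduli description at all: the universal family $U\subset X\times\mathrm{Sym}^d(X)$ is generically \'etale of degree $d$ over $\mathrm{Sym}^d(X)$, so its fiber over the generic point is honestly a separable degree-$d$ subscheme of $X_K$; that is what feeds into Theorem \ref{Pr}. In the backward direction, the passage from the degree-$e$ closed point $Q\subset X_K$ to a $K$-point of $\mathrm{Sym}^e(X)$ is unproblematic, but the clean way to phrase it is the paper's: $Q$ gives a $K$-point of $\mathrm{Hilb}^e_{X_K/K}$, which one pushes forward along the Hilbert--Chow morphism $\mathrm{Hilb}^e_{X_K/K}\to\mathrm{Sym}^e(X_K)$ and then projects to $\mathrm{Sym}^e(X)$. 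With those two substitutions your argument is the paper's argument.
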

\begin{proof}
Let $K$ be the function field of $\mathrm{Sym}^d(X)$. The universal family $U\subset X\times\mathrm{Sym}^d(X)$ gives a separable degree $d$ point of $X_K$.
Since $3\nmid d$, by Theorem \ref{Pr}, there exists a $K$-point on $\mathrm{Hilb}^e(X_K)$ for $e=1,4$ or $10$. Composing with the Hilbert-Chow morphism $\pi\colon\mathrm{Hilb}^e_{X_{K}/K}\to\mathrm{Sym}^e(X_K)$, see \cite[1.5]{Nkjm}, we obtain a $K$-rational point on $\mathrm{Sym}^e(X_K)=\mathrm{Sym}^e(X)\times_kK$.
Composing with the first projection, we get a $K$-point of $\mathrm{Sym}^e(X)$. Note that $K$ is the function field of $\mathrm{Sym}^d(X)$, this gives a rational map $\mathrm{Sym}^d(X)\dashrightarrow\mathrm{Sym}^e(X)$.
\end{proof}

Conversely, the existence of rational map give information about rational points.

\begin{lemma}{\cite[3.6.11]{Poonen}}\label{LN} Let $X\dashrightarrow Y$ be a rational map between $k$-varieties, where $Y$ is proper. If $X$ has a smooth $k$-point, then $Y$ has a $k$-point.
\end{lemma}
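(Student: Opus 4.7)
This is the Lang--Nishimura lemma, and the plan is to prove it by induction on $n=\dim_p X$, after first replacing $X$ by the unique irreducible component containing the smooth point $p$. In the base case $n=0$, the smooth $k$-point $p$ is isolated in $X$, hence lies in the dense open on which $\phi$ is defined, so $\phi(p)$ furnishes the desired $k$-point of $Y$.

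For the inductive step my plan is to blow up $X$ at $p$ to obtain $\pi\colon\tilde{X}\to X$ with exceptional divisor $E\cong\mathbb{P}^{n-1}_k$. Since $p$ is a smooth $k$-point, $\tilde{X}$ is smooth in a neighborhood of $E$, and $E$ itself is smooth with abundant $k$-points such as $[1\colon 0\colon\cdots\colon 0]$. The composite rational map $\phi\circ\pi\colon\tilde{X}\dashrightarrow Y$ still targets a proper variety. The key intermediate step is to show that this map restricts to a rational map $E\dashrightarrow Y$, i.e.\ is defined on a dense open subset of $E$. Granting this, the inductive hypothesis applied to $E$ (of dimension $n-1$) together with its smooth $k$-point produces a $k$-point of $Y$, completing the induction.

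The main obstacle is the restriction step. When $Y$ is projective, I would use the classical fact that a rational map from a smooth variety to projective space has indeterminacy locus of codimension at least $2$; intersecting this with the codimension-one divisor $E$ leaves a proper closed subset of $E$, so $E$ minus the indeterminacy is dense. For a general proper $Y$, my plan is to reduce to the projective case via Chow's lemma: choose a proper birational $Y'\to Y$ with $Y'$ projective, lift the rational map by taking the closure of the graph in $\tilde{X}\times Y'$, and observe that a $k$-point of $Y'$ produces one of $Y$. As an alternative route that avoids induction altogether, one can argue valuatively: exploiting the regularity of $\mathcal{O}_{X,p}$, construct a discrete valuation ring $R$ with residue field $k$ and a morphism $\mathrm{Spec}(R)\to X$ sending the closed point to $p$ and the generic point into the domain of $\phi$; the composition $\mathrm{Spec}(\mathrm{Frac}(R))\to Y$ then extends to $\mathrm{Spec}(R)\to Y$ by the valuative criterion of properness for $Y$, and its value at the closed point of $R$ is the required $k$-point.
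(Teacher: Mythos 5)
The paper offers no proof of this lemma, citing it directly as \cite[3.6.11]{Poonen}; your blow-up induction is precisely the argument given there (due to Koll\'ar and Szab\'o), so you have essentially reproduced the intended proof. Your outline is correct; the only point worth tightening is the ``restriction step'': for \emph{any} proper $Y$ (not just projective), the indeterminacy locus of a rational map from a regular variety already has codimension $\geq 2$, because the local ring at a codimension-one point is a DVR and the valuative criterion of properness extends the map across it --- so the detour through Chow's lemma, while harmless, is unnecessary. Your alternative valuative argument also works, but the existence of an arc through $p$ whose generic point avoids the indeterminacy locus is exactly the step that needs justification (a nonzero element of $\widehat{\mathcal{O}}_{X,p}$ must be shown not to vanish on a suitably chosen arc), so as stated it is a plan rather than a complete proof.
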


\begin{proposition}\label{lemmapoint}
Let $X$ be a smooth $k$-surface. Given a rational map $f\colon\mathrm{Sym}^d(X)\dashrightarrow\mathrm{Sym}^e(X)$, then existence of a length $d$ subscheme on $X$ implies the existence of a $k$-point on $\mathrm{Sym}^e(X)$.
\end{proposition}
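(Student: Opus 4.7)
The plan is to invoke Lemma \ref{LN}, which converts a smooth $k$-point on the source of a rational map (with proper target) into a $k$-point on the target. A length $d$ subscheme $Z$ on $X$ is by definition a $k$-point of $\mathrm{Hilb}^d(X)$, and via the Hilbert-Chow morphism this yields a $k$-point of $\mathrm{Sym}^d(X)$. However, $\mathrm{Sym}^d(X)$ is typically singular along the image of the big diagonal when $\dim X \geq 2$, so this $k$-point need not be smooth, and Lemma \ref{LN} cannot be applied directly to $f$.

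To circumvent this, I would stay on the Hilbert scheme. By Fogarty's theorem, $\mathrm{Hilb}^d(X)$ is smooth since $X$ is a smooth surface, so $[Z]$ is automatically a smooth $k$-point. The Hilbert-Chow morphism $\pi\colon \mathrm{Hilb}^d(X) \to \mathrm{Sym}^d(X)$ is proper and surjective, so the composition $f\circ\pi\colon \mathrm{Hilb}^d(X) \dashrightarrow \mathrm{Sym}^e(X)$ is a well-defined rational map whose source is smooth at $[Z]$. Applying Lemma \ref{LN}, using that $\mathrm{Sym}^e(X)$ is proper (as is implicit when $X$ is a projective surface, the intended setting of the paper), then yields the desired $k$-point on $\mathrm{Sym}^e(X)$.

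The only verification required is that $f\circ\pi$ is a genuine rational map, i.e., that the preimage under $\pi$ of the domain of definition of $f$ is dense in $\mathrm{Hilb}^d(X)$; this is immediate from surjectivity of $\pi$ applied to a dense open of $\mathrm{Sym}^d(X)$. I do not expect a real obstacle here: the essential content of the argument is the two-step promotion ``length $d$ subscheme yields a smooth $k$-point on $\mathrm{Hilb}^d(X)$, hence a smooth $k$-point on the source of a rational map to $\mathrm{Sym}^e(X)$'', which lets us bypass the singularities of $\mathrm{Sym}^d(X)$ that would otherwise block a direct application of Lemma \ref{LN}.
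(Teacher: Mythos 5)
Your proposal is correct and follows the paper's proof exactly: compose $f$ with the Hilbert--Chow morphism, use Fogarty's smoothness of $\mathrm{Hilb}^d_{X/k}$ and properness of $\mathrm{Sym}^e(X)$, and apply Lemma \ref{LN}. The extra remarks about why one cannot apply Lemma \ref{LN} directly to $\mathrm{Sym}^d(X)$ and why the composite is a genuine rational map are sound but only make explicit what the paper leaves implicit.
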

\begin{proof}
Consider the composition of $f$ with the Hilbert-Chow morphism $f\circ \pi\colon\mathrm{Hilb}^d_{X/k}\to\mathrm{Sym}^d(X)\dashrightarrow\mathrm{Sym}^e(X)$. Note that $\mathrm{Hilb}^d_{X/k}$ is smooth, $\mathrm{Sym}^e(X)$ is proper, the conclusion follows from Lemma \ref{LN}.
\end{proof}

We explain the relation between $k$-points of $\mathrm{Sym}^e(X)$ and zero-cycles of degree $e$ on $X$.

\begin{example}{\cite[4.1.2]{Kol}} Let $k=\mathbb{F}_2(s,t)$ be the purely transcendental extension of $\mathbb{F}_2$. Consider the second symmetric product of $\mathbb{A}^2_k$ and the universal family $\mathbb{A}_{k,(x_1,y_1)}^2\times\mathbb{A}_{k,(x_2,y_2)}^2\to \mathrm{Sym}^2(\mathbb{A}^2_k)$. The corresponding map on coordinate rings is $$k[x_1+x_2,x_1x_2,y_1+y_2,y_1y_2,x_1y_1+x_2y_2]\hookrightarrow k[x_1,x_2,y_1,y_2]$$ The fiber over the $k$-rational point $(0,s,0,t,0)$ has coordinate ring $k[x_1,x_2,y_1,y_2]/(x_1+x_2,x_1x_2-s,y_1+y_2,y_1y_2-t,x_1y_1+x_2y_2)=k[x,y]/(x^2-s,y^2-t)$. The fiber is a reduced zero-cycle of length $4$.
\end{example}

However, this is the worst possible case.
\begin{proposition}\label{cycle}Let $k$ be a field of characteristic $p$. Let $X$ be a smooth $d$-dimensional variety over $k$.
Let $n=m\cdot p^h$ be a positive integer, with $(m,p)=1$, (if $p=0$, we use convention $p^h=0^0=1$). Then the existence of a $k$-point on $\mathrm{Sym}^n(X)$ implies the existence of a zero-cycle of degree $m\cdot p^{d\cdot h}$ on $X$.
\end{proposition}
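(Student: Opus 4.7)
My plan is to base-change to $\overline k$, read off the multiset associated to $\xi$, organize it into Galois orbits, and construct the required zero-cycle as a $\mathbb Z$-linear combination of the resulting closed points. After base change, $\xi$ corresponds to a multiset $\{q_1,\dots,q_n\}$ of $\overline k$-points of $X$, invariant under $\mathrm{Aut}(\overline k/k)\cong\mathrm{Gal}(k^{\mathrm{sep}}/k)$. Its distinct underlying points fall into orbits $\mathcal O_i$, each corresponding to a closed point $P_i\in X$ with $[\kappa(P_i):k]_s=|\mathcal O_i|=:s_i$ and $[\kappa(P_i):k]_i=:p^{e_i}$; within each orbit the multiplicity is a common value $\nu_i$, so $\sum_is_i\nu_i=n=mp^h$.

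I would then reduce to the numerical divisibility $g\mid mp^{dh}$, where $g:=\gcd_i[\kappa(P_i):k]=\gcd_i(s_ip^{e_i})$. Granting this, an integer combination $\sum n_i[P_i]$ gives a zero-cycle of degree exactly $mp^{dh}$. For a prime $\ell\neq p$ the argument is easy: $\gcd_is_i$ divides every $s_i\nu_i$, hence divides $n$, so $v_\ell(g)\leq v_\ell(n)=v_\ell(m)$. In characteristic zero all $e_i=0$ and the cycle $\sum\nu_i[P_i]$ directly has degree $n=m=mp^{dh}$, settling that case.

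The main step and principal obstacle is the characteristic-$p$ bound $\min_i(v_p(s_i)+e_i)\leq dh$, which I would deduce from the key inseparability estimate $e_i\leq d\cdot v_p(\nu_i)$. To prove the latter, I would pass to the completion of $\mathrm{Sym}^n(X)$ at $\xi_{\overline k}$, which factors (Galois-equivariantly) as a product of completions indexed by distinct geometric points, so that the $k$-rationality of $\xi$ decouples into one condition per orbit. For a single orbit $\mathcal O_i$ with $\nu_i$-fold multiplicity, the relevant symmetric invariants collapse, via Lucas's congruence $\binom{p^{v_p(\nu_i)}}{j}\equiv 0\pmod p$ for $0<j<p^{v_p(\nu_i)}$, to polynomials in the $p^{v_p(\nu_i)}$-th powers of the affine coordinates of $q\in\mathcal O_i$; the $k$-rationality of this piece forces those $p^{v_p(\nu_i)}$-th powers to lie in $k^{\mathrm{sep}}$. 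Since $X$ is locally \'etale over $\mathbb A^d_k$, the inseparable part $\kappa(P_i)/\kappa(P_i)^{\mathrm{sep}}$ is generated by at most $d$ elements, and the bound $e_i\leq d\cdot v_p(\nu_i)$ follows. Choosing an index $i$ minimizing $v_p(s_i\nu_i)$, so $v_p(s_i\nu_i)\leq v_p(n)=h$ and in particular $v_p(\nu_i)\leq h$, one concludes
\[
v_p(s_i)+e_i\leq(h-v_p(\nu_i))+d\cdot v_p(\nu_i)\leq h+(d-1)h=dh,
\]
as required.
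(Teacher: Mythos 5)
Your argument is correct and, at its core, follows the same route as the paper's proof: both rest on (i) the fact that for a $k$-point of $\mathrm{Sym}^n(X)$ the multiplicity $\nu_i$ of a geometric point controls the exponent of inseparability of the corresponding closed point (the $p^{v_p(\nu_i)}$-th powers of its coordinates lie in $k^{\mathrm{sep}}$), and (ii) an \'etale chart $U\to\mathbb{A}^d_k$ showing that the residue field is generated by $d$ such coordinates, whence $[\kappa(P_i):k]_{\mathrm{insep}}\leq p^{d\cdot v_p(\nu_i)}$. Indeed, your key estimate $e_i\leq d\cdot v_p(\nu_i)$ is exactly the paper's chain $f_0\leq h_0-e_0$ (from Rydh) combined with $h_0\leq d e_0$ (\'etale coordinates), rewritten via $v_p(\nu_i)=h_i-f_i$. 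Two differences are worth recording. First, the paper imports (i) wholesale from Rydh's theory of quasi-integral cycles (\cite[II.8.10--8.11]{rydh}), whereas you propose to reprove it by hand from the multisymmetric functions; this is feasible (the multiplicity of a root of a polynomial in $k[T]$ is automatically divisible by $p$ raised to the exponent of that root, so Lucas is not even needed), but the Galois-equivariant decoupling of orbits and the choice of coordinates separating the geometric points is precisely where the real work sits, and it is the thinnest part of your sketch --- you are essentially re-deriving the cited input rather than avoiding it. Second, your endgame is more careful than the paper's: the paper isolates a single component $Z_0$ and concludes from $\deg(Z_0)\leq m\cdot p^{dh}$ that it ``gives the desired zero-cycle,'' which literally only produces a closed point of bounded degree, whereas your reduction to the divisibility $\gcd_i\deg(P_i)\mid m\cdot p^{dh}$, checked prime by prime (the $\ell\neq p$ case being immediate from $\sum_i s_i\nu_i=n$), genuinely yields a $\mathbb{Z}$-coefficient zero-cycle of degree exactly $m\cdot p^{dh}$ as the statement demands.
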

\begin{proof}By \cite[I.4.2.5]{rydh} and \cite[II.8.11]{rydh}, a $k$-point of $\mathrm{Sym}^n(X)$ corresponds to a ``quasi-integral effective zero-cycle'' $Z=\sum a_i[Z_i]$ on $X$ of degree $n$. By \cite[II.8.10]{rydh}, this means $Z$ is a $\mathbb{Q}$-coefficient zero-cycle, and for each $Z_i$, the denominators of $a_i$ is divides $p^{h_i-e_i}$, where $p^{e_i}=\mathrm{min}\{p^e|\kappa(Z_i)^{p^e}\subset k^{\mathrm{sep}}\}$ and $[\kappa(Z_i)\colon k]_{\mathrm{insep}}=p^{h_i}$. Let $f_i=-v_p(a_i)$, then $f_i\leq h_i-e_i$. Let $[\kappa(Z_i)\colon k]_{\mathrm{sep}}=m_i$.
Suppose $Z_0$ is the component where $h_i-f_i$ attains minimum, then $m_0\cdot p^{h_0-f_0}\leq n$ and $h_0-f_0\leq h$. It suffices to show $f_0\leq (h_0-f_0)(d-1)$ or $\frac{f_0}{h_0}\leq \frac{d-1}{d}$, because this implies $f_0\leq h(d-1)$, thus $\mathrm{deg}([Z_{0}])=m_0\cdot p^{h_0}\leq n\cdot p^{f_0}=m\cdot p^{h+f_0}\leq m\cdot p^{d\cdot h}$ and $Z_0$ gives the desired zero-cycle. Since $f_0\leq h_0-e_0$, it suffices to check $\frac{h_0-e_0}{h_0}\leq\frac{d-1}{d}$, namely $h_0\leq de_0$. Since $h_0$ and $e_0$ do not change if we base change to $k^{\mathrm{sep}}$, we may assume $k=k^{\mathrm{sep}}$ and $Z_0$ is a purely inseparable point.

Note we have closed immersion $i\colon \kappa(Z_0)\to X$.
Since $X$ is smooth, by \cite[Tag 054L]{SP}, there exists an open subset $U\subset X$ containing $Z_0$ and \'etale morphism $\phi\colon U\to \mathbb{A}^d_k$. By \cite[Tag 04XV]{SP}, we know $\phi\circ i$ is a closed immersion. Since we assumed $\kappa(Z_0)/k$ is purely inseparable, we may write $\kappa(Z_0)$ as $k[T_1,\dots,T_d]/(T_1^{p^{r_1}}-a_1,\dots,T_d^{p^{r_d}}-a_d)$. By definition of $e$, we know $r_i\leq e_0$. Thus $p^{h_0}=[\kappa(Z_0)\colon k]\leq (p^{e_0})^d$ and we conclude that $h_0\leq de_0$.
\end{proof}
\begin{remark} If $\mathrm{char}(k)=0$, by the proposition, a rational point on the $\mathrm{Sym}^d(X)$ implies the existence of a degree $d$ zero cycle on $X$.
\end{remark}
\section{The generic fibers}\label{sec6}
We study the rational map between symmetric products of cubic 3-folds and 4-folds, which are obtained by taking intersection with rational normal curves, see section \ref{sec4}. Such rational maps are dominant, and their generic fibers can be explicitly described. We study their rationality and apply the results to stably birationality of symmetric products of cubic hypersurfaces in the next section.

\subsection{The variety $F_{P.n}$}
Let $k$ be a field, let $i\colon P\hookrightarrow\mathbb{P}^n_k$ be a reduced subscheme of length $d$ in linearly general position. We define the variety $F_{P,n}$, which represents families of smooth genus zero curve passing through $P$.
Let's consider the functor $\mathcal{F}_{P,n}\colon\mathrm{Sch}/k\to\mathrm{Sets}$, which sends a $k$-scheme $T$ to the set of isomorphism classes of commutative diagrams of flat $T$-schemes
$$\xymatrix{
P\times_kT=\colon P_T\ar[rr]^{i\times 1_T}\ar[rd]& &\mathbb{P}^n_T\\
& C\ar[ru]^{i_C} &
}$$
where $C$ is a flat family of genus $0$ degree $n+1$ curves over $T$ and $i_C$ is closed immersion.

\begin{proposition}\label{fop} The functor $\mathcal{F}_{P,n}$ is represented by a $k$-scheme.
\end{proposition}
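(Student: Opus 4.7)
The plan is to realize $F_{P,n}$ as a locally closed subscheme of a Hilbert scheme of curves in $\mathbb{P}^n_k$. Let $h(t)=(n+1)t+1$, the Hilbert polynomial of a rational normal curve of degree $n+1$, and set $H := \mathrm{Hilb}^{h}_{\mathbb{P}^n_k/k}$; by Grothendieck's theorem this is a projective $k$-scheme carrying a universal flat family $\mathcal{C}\subset\mathbb{P}^n_k\times_k H$. Every smooth, connected curve of arithmetic genus $0$ and degree $n+1$ in $\mathbb{P}^n$ has Hilbert polynomial $h$, so every object of $\mathcal{F}_{P,n}(T)$ determines, after forgetting the incidence with $P_T$, a $T$-point of $H$.

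Next I would impose the incidence condition. Both $P\times_k H$ and $\mathcal{C}$ are closed subschemes of $\mathbb{P}^n_k\times_k H$ flat over $H$, and I claim that the condition ``$P\times_k H \subset \mathcal{C}$'' is representable by a closed subscheme $H'\subset H$. This can be checked directly: twisting by $\mathcal{O}(m)$ for $m\gg 0$ and pushing forward along the projection $\pi\colon\mathbb{P}^n_k\times_k H\to H$, the natural map $\pi_*(\mathcal{I}_{\mathcal{C}}(m))\to \pi_*(\mathcal{O}_{P\times_k H}(m))$ becomes a morphism of locally free $\mathcal{O}_H$-modules (by cohomology and base change, for $m$ large enough), and its vanishing locus is closed in $H$ and, by Nakayama and the choice of $m$, cuts out exactly the locus where the incidence holds after any base change $T\to H$. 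Alternatively, one can invoke the representability of the flag (or nested) Hilbert scheme $\mathrm{Hilb}^{d\subset h}_{\mathbb{P}^n_k/k}$ and take $H'$ to be the fiber of its projection to $\mathrm{Hilb}^d_{\mathbb{P}^n_k/k}$ over the $k$-point $[P]$.

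Finally I would restrict to the smooth locus. By openness of smoothness in flat projective families, the set $F_{P,n}\subset H'$ where $\mathcal{C}|_{H'}\to H'$ is smooth is open. By construction, a $T$-point of $F_{P,n}$ is the same data as a flat family of smooth genus $0$ degree $n+1$ curves $C\subset\mathbb{P}^n_T$ containing $P_T$, so $F_{P,n}$ represents $\mathcal{F}_{P,n}$.

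The main technical step is the representability of the incidence condition in the second paragraph; everything else is essentially formal. The subtle point to verify carefully is that the pushforwards used above are indeed locally free for $m$ sufficiently large, which is a standard application of cohomology and base change to the flat families $P\times_k H$ and $\mathcal{C}$ over $H$. Once that is in hand, the rest of the argument is a routine assembly of universal properties.
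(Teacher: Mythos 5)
Your proof is correct and follows essentially the same route as the paper: both realize $F_{P,n}$ as the locus inside the Hilbert scheme of smooth degree-$(n+1)$ genus-$0$ curves where the universal curve contains $P$. The only difference is the device used to represent the incidence condition --- the paper invokes representability of the sheaf-Hom functor $\underline{\mathrm{Hom}}_{\mathbb{P}^n_S}(\mathcal{O}_U,\mathcal{O}_{P_S})$ and passes to the locus where the homomorphism is surjective, whereas you use the classical cohomology-and-base-change argument (or equivalently the flag Hilbert scheme); both are standard, and your Hilbert polynomial $(n+1)t+1$ is in fact the correct one (the paper's $t+n+1$ appears to be a typo).
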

\begin{proof} Let $h(t)=t+n+1$, consider the open subscheme  $S=\mathrm{Hilb}^{h(t),\circ}_{\mathbb{P}^3_k/k}\subseteq \mathrm{Hilb}^{h(t)}_{\mathbb{P}^3_k/k}$ representing families of smooth degree $n+1$ genus zero curves. Let $\mathbb{P}^n_S\supset U\to S$ be the universal family. Consider the subscheme $P\times_kS=\colon P_S\subset \mathbb{P}^n_S$. Note the functor $\underline{\mathrm{Hom}}_{\mathbb{P}^n_S}(\mathcal{O}_U,\mathcal{O}_{P_S})\colon \mathrm{Sch}/S\to\mathrm{Sets}, T\mapsto \mathrm{Hom}_{\mathbb{P}^n_T}(\mathcal{O}_{U_T},\mathcal{O}_{P_T})$,  is represented by an $S$-scheme, see [Tag 08K6]. Let $Y$ be the component where the homomorphism is surjective (so $P_T$ is closed subscheme of $U_T$), then $Y$ represents the functor $\mathcal{F}_{P,n}$.
\end{proof}

\begin{definition}We denote the scheme representing $\mathcal{F}_{P,n}$ by $F_{P,n}$.
\end{definition}
\begin{remark}
Note that by Lemma \ref{Rnc}, if $\mathrm{deg}(P)=n+3$, then $F_{P,n}$ consists of a point. If $\mathrm{deg}(P)>n+3$, then $F_{P,n}$ is empty.
\end{remark}

\subsection{Rationality of $F_{P,n}$}
We study the rationality of $F_{P,n}$ when $\mathrm{deg}(P)\leq n+2$.

\begin{proposition}\label{general} The variety $F_{P,n}$ is stably rational if $2\nmid n$ or $2\nmid \mathrm{deg}(P)$.
\end{proposition}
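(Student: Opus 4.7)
The plan is to use a relative symmetric-product construction on the universal rational curve in order to reduce the problem to the boundary cases $\deg(P)\in\{n+2,n+3\}$, where the answer is either a single point or a conic fibration that I can split by hand.

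First I would observe that the universal curve $\pi\colon V\to F_{P,n}$, restricted to a dense open, is a smooth $\mathbb{P}^1$-fibration, hence a Severi--Brauer bundle of relative dimension $1$ with a $2$-torsion class $\beta\in\mathrm{Br}(F_{P,n})$. The representation $\mathrm{Sym}^r\colon\mathrm{PGL}_2\to\mathrm{PGL}_{r+1}$ acts on the center $\mathbb{G}_m\subset\mathrm{GL}_2$ as the $r$-th power, so via the connecting map it multiplies Brauer classes by $r$. Consequently the relative symmetric product $W:=\mathrm{Sym}^r_{F_{P,n}}(V)$ is a Severi--Brauer bundle of relative dimension $r$ with class $r\beta$, which vanishes for $r$ even. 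Thus for $r$ even, $W\to F_{P,n}$ is a Zariski-locally trivial $\mathbb{P}^r$-bundle, so $W$ is birational to $F_{P,n}\times\mathbb{P}^r$ and stably birational to $F_{P,n}$. I would then pick $r$ to be the unique even integer with $d+r\in\{n+2,n+3\}$, and consider the ``forget the curve'' morphism $W\to\mathrm{Sym}^r(\mathbb{P}^n)$. Its generic fiber is $F_{P_K\cup Q,n}$ over the function field $K$ of $\mathrm{Sym}^r(\mathbb{P}^n)$, where $Q$ is the tautological degree-$r$ subscheme. Since $\mathrm{Sym}^r(\mathbb{P}^n)$ is $k$-rational, $K/k$ is purely transcendental, so stable rationality passes from the generic fiber over $K$ to $W$ and hence to $F_{P,n}$ over $k$.

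It remains to handle the two base cases. If $d+r=n+3$ then $F_{P_K\cup Q,n}$ is a single $K$-point by Proposition \ref{Rnc3}, and the conclusion is immediate. If $d+r=n+2$ then $d\equiv n\pmod 2$, and the hypothesis forces $n$ odd; I argue directly over the appropriate base field. The second projection $V\to\mathbb{P}^n$, $(C,q)\mapsto q$, is birational, because for generic $q$ the degree-$(n+3)$ scheme $P\cup\{q\}$ lies on a unique rational normal curve by Proposition \ref{Rnc3}; in particular $V$ is rational. Intersecting $V$ with the preimage of a $k$-hyperplane $H\subset\mathbb{P}^n$ yields a multisection of $V\to F_{P,n}$ of degree $n$, which is odd. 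The classical fact that a conic over a field admitting a zero-cycle of odd degree has a rational point then splits the generic fiber of the conic fibration $V\to F_{P,n}$, so $V\sim_{\mathrm{bir}}F_{P,n}\times\mathbb{P}^1$; combined with rationality of $V$ this gives stable rationality of $F_{P,n}$.

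The main obstacle I anticipate is the Brauer-class bookkeeping of the first paragraph: verifying that $\pi$ is a Severi--Brauer bundle on a suitable dense open and justifying the multiplication-by-$r$ formula for the Brauer class of its relative $\mathrm{Sym}^r$. Once this mod-$2$ structure is in hand the rest is a routine fibration chase plus the period-equals-index fact for conics.
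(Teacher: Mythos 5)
Your plan is correct, but it organizes the incidence correspondence differently from the paper. The paper takes $m=n+3-\deg(P)$ and uses the single dominant map $\mathrm{Sym}^m(\mathbb{P}^n_k)\dashrightarrow F_{P,n}$, $Q\mapsto C_{P\cup Q}$, whose generic fiber over $K=k(F_{P,n})$ is $\mathrm{Sym}^m(C_K)$; the parity hypothesis is spent exactly once, to produce an odd-degree zero-cycle on the genus-zero curve $C_K$ (namely $P_K$ if $\deg(P)$ is odd, or a hyperplane section of the degree-$n$ curve if $n$ is odd), so that $C_K\cong\mathbb{P}^1_K$, the generic fiber is $\mathbb{P}^m_K$, and Mattuck's rationality of $\mathrm{Sym}^m(\mathbb{P}^n_k)$ finishes the argument --- giving in fact that $F_{P,n}\times\mathbb{P}^m_k$ is rational. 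You instead choose the symmetric-power degree $r$ even so that $\mathrm{Sym}^r_{F_{P,n}}(V)$ is a trivial $\mathbb{P}^r$-bundle with no conic-splitting needed; the Brauer bookkeeping you worry about is standard and can be bypassed entirely by writing $\mathrm{Sym}^r_{F}(V)\cong\mathbb{P}\bigl(\pi_*\omega_{V/F}^{-r/2}\bigr)$ for $r$ even, since $\mathrm{Pic}$ of a genus-zero curve is generated over the ground field by the canonical class. The price is that your second projection to $\mathrm{Sym}^r(\mathbb{P}^n_k)$ is no longer birational in general, forcing the base case $\deg=n+2$ with $n$ odd, which you then resolve by exactly the paper's mechanism in miniature: the odd-degree hyperplane multisection splits the conic bundle $V\to F_{P,n}$. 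So both arguments rest on the same two ingredients --- the curves-through-points correspondence and period-equals-index for conics --- but you defer the conic-splitting from the generic step to a base case, while the paper's version is shorter and yields a marginally stronger conclusion; what yours buys is a transparent explanation of why only the parities of $n$ and $\deg(P)$ matter, namely that the sole obstruction is the $2$-torsion Brauer class of the universal conic bundle. One shared point to watch: both proofs need the universal curves to have degree $n$ (as rational normal curves do; the ``degree $n+1$'' in the paper's definition of $\mathcal{F}_{P,n}$ appears to be a slip), since otherwise the odd-degree hyperplane section used when $n$ is odd would not exist.
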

\begin{proof} Let $m=n+3-\mathrm{deg}(P)$. Consider the rational map $f\colon \mathrm{Sym}^{m}(\mathbb{P}^n_k)\dashrightarrow F_{P,n}$, associating a general degree $m$ subscheme $Q$ to the unique genus zero curve passing through $P,Q$. Let $C\to F_{P,n}$ be the universal family. Let $K$ be the function field of $F_{P,n}$. Note that geometrically, the curve $C_K$ is a rational normal curve in $\mathbb{P}^n_K$. If $n$ or $\mathrm{deg}(P)$ is odd, then $C_K$ is a conic with a closed point of odd degree, thus $C_K\cong\mathbb{P}^1_K$. The generic fiber of $f$ is $W=\mathrm{Sym}^m(C_K)\cong\mathbb{P}_K^m$. Thus
$F_{P,n}\times \mathbb{P}^m_K \overset{\mathrm{birat.}}{\sim}\mathrm{Sym}^m(\mathbb{P}_k^n)$. We know $\mathrm{Sym}^m(\mathbb{P}^n_k)\overset{\mathrm{birat.}}{\sim}\mathbb{P}^{mn}_k$, see \cite{Mut}, thus $F_{P,n}$ is stably rational.
\end{proof}

\begin{proposition}\label{n+1}
The variety $F_{P,n}$ is rational if $\deg(P)=n+1$.
\end{proposition}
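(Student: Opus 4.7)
The plan is to leverage the Cremona transformation $\mathrm{Cr}_P \colon \mathbb{P}^n_k \dashrightarrow \mathbb{P}^n_k$ constructed in Section \ref{sec4}, which is defined over $k$ precisely because $\deg(P) = n+1$ and the geometric points of $P$ are in linearly general position. By Lemma \ref{Rnc}, $\mathrm{Cr}_P$ induces a bijection between smooth genus-zero curves in $\mathbb{P}^n_k$ passing through $P$ and lines in $\mathbb{P}^n_k$ not meeting the union $\Sigma$ of the $(n-2)$-planes spanned by size-$(n-1)$ subsets of the geometric points of $P$. My strategy is to upgrade this bijection to a birational equivalence between $F_{P,n}$ and the open subvariety $U \subset \mathrm{Gr}(2, n+1)$ parameterizing lines disjoint from $\Sigma$, and then invoke rationality of the Grassmannian. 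A dimension count (a rational normal curve through $n+1$ geometric points in linearly general position depends on $2n-2$ parameters) gives $\dim F_{P,n} = 2(n-1) = \dim \mathrm{Gr}(2,n+1)$, consistent with such a birational equivalence.

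To construct the birational equivalence, I would apply $\mathrm{Cr}_P \times \mathrm{id}$ to the universal family of curves $\mathcal{C} \subset \mathbb{P}^n_{F_{P,n}}$. Over a dense open of $F_{P,n}$ this gives, after taking scheme-theoretic image, a flat family of lines in $\mathbb{P}^n$ avoiding $\Sigma$, and by the universal property of the Grassmannian this yields a rational map $F_{P,n} \dashrightarrow U$. Running the same construction in reverse with the universal line over $U$ and the involution $\mathrm{Cr}_P^{-1} = \mathrm{Cr}_P$ produces a rational map $U \dashrightarrow F_{P,n}$. By Lemma \ref{Rnc}, these two rational maps agree with the inverse bijection on geometric points, so they are mutually inverse birational equivalences.

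Finally, $\mathrm{Gr}(2, n+1)$ is rational over any field $k$ via its open Bruhat cell, which is isomorphic to $\mathbb{A}^{2(n-1)}_k$; hence $U$ is rational and so is $F_{P,n}$. I expect the main technical point to be verifying that applying $\mathrm{Cr}_P$ fiberwise to a family of curves (respectively of lines) actually produces a flat family of the correct type over a dense open of the base, so that the induced rational map of moduli spaces is honestly defined. This should reduce to the observation that $\mathrm{Cr}_P$ restricts to an isomorphism over a dense open $V \subset \mathbb{P}^n_k$ containing a general point of each member of either family, combined with openness of flatness over the parameter scheme.
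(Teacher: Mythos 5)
Your proposal is correct and follows essentially the same route as the paper: the paper's proof likewise invokes Lemma \ref{Rnc} to identify smooth genus-zero curves through $P$ with lines in $\mathbb{P}^n_k$ avoiding a codimension-$2$ locus, and concludes rationality of $F_{P,n}$ from rationality of the space of lines. You simply spell out the family-theoretic details (universal families, flatness, the Grassmannian's Bruhat cell) that the paper leaves implicit.
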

\begin{proof}
By Lemma \ref{Rnc}, the smooth genus zero curves of degree $n+1$ through the $n+1$ points corresponds to lines in $\mathbb{P}^n_k$ avoiding some codimension $2$ subset, thus $F_{P,n}$ is rational. \end{proof}

\begin{proposition}\label{n+2odd} The variety $F_{P,n}$ is rational if $\deg(P)=n+2$ and $n$ is odd.
\end{proposition}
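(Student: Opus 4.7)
My approach combines the construction of Proposition~\ref{general}, specialized to $m=1$, with a Brauer--Severi descent argument that leverages the parity of $n$.

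First, with $m = n+3-\deg(P) = 1$, the construction of Proposition~\ref{general} produces a rational map $f \colon \mathbb{P}^n_k = \mathrm{Sym}^1(\mathbb{P}^n_k) \dashrightarrow F_{P,n}$ sending a general $Q$ to the unique rational normal curve through $P \cup \{Q\}$ (Proposition~\ref{Rnc3}). Let $K = k(F_{P,n})$, and let $C_K$ denote the universal RNC over $K$. Since $n$ is odd, any $k$-rational hyperplane cuts out on $C_K$ an effective $K$-divisor of odd degree $n$; as $C_K$ is a smooth genus zero curve (a Brauer--Severi variety of dimension $1$), the existence of such a divisor forces its class (of order dividing $2$) to vanish, so $C_K \cong \mathbb{P}^1_K$. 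The generic fiber of $f$ is therefore $\mathbb{P}^1_K$, so $\mathbb{P}^n_k \sim_{\mathrm{birat.}} F_{P,n} \times_k \mathbb{P}^1$. This already shows that $F_{P,n}$ is stably rational.

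To upgrade to rationality, I would exhibit $F_{P,n}$ as birational to a Brauer--Severi variety over $k$. Over $k^{\mathrm{sep}}$, Lemma~\ref{Rnc} applied to any $n+1$ of the $n+2$ geometric points of $P$ identifies $F_{P,n, k^{\mathrm{sep}}}$ birationally with the space of lines in $\mathbb{P}^n_{k^{\mathrm{sep}}}$ through a single fixed point (the Cremona image of the omitted geometric point), which is an open subscheme of $\mathbb{P}^{n-1}_{k^{\mathrm{sep}}}$. Taking the canonical smooth proper completion and analyzing the Galois action, which permutes the role of the omitted geometric point among the $n+2$ points of $P$, one realizes a smooth proper birational model $\bar{F}$ of $F_{P,n}$ as a Brauer--Severi variety of dimension $n-1$ over $k$.

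Once the Brauer--Severi structure is in place, the stable rationality from the first step implies $\bar{F} \times \mathbb{A}^N$ is rational for some $N$, whence $\bar{F}(k)$ is nonempty. A Brauer--Severi variety of positive dimension with a $k$-rational point is isomorphic to projective space, so $\bar{F} \cong \mathbb{P}^{n-1}_k$ and hence $F_{P,n}$ is rational.

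The main obstacle lies in the second step: one must verify that the Cremona-based birational identification, which depends on a non-canonical choice of the omitted geometric point of $P$, descends through the Galois action to give a well-defined $\mathrm{PGL}_n$-cocycle, and that the resulting smooth proper model is genuinely geometrically isomorphic (not merely birational) to $\mathbb{P}^{n-1}$. Controlling the degenerate locus of the Cremona transformation so that it remains $\mathrm{Gal}(k^{\mathrm{sep}}/k)$-equivariantly compatible with the $\mathbb{P}^{n-1}$ structure is the technical heart of the proof.
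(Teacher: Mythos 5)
Your first step is correct, but it is just Proposition \ref{general} specialized to $m=1$: since $n$ is odd the universal curve $C_K$ carries an odd-degree divisor and is therefore $\mathbb{P}^1_K$, which yields only \emph{stable} rationality of $F_{P,n}$. The entire content of Proposition \ref{n+2odd} is the upgrade to honest rationality, and there your second step has a genuine gap that is not merely technical. The identification you propose (omit one of the $n+2$ geometric points, apply the Cremona transformation at the remaining $n+1$, and record the line through the image of the omitted point, equivalently the tangent direction of the curve at that point) is precisely one of Kapranov's birational maps $(\overline{M}_{0,n+2})_{\overline{k}}\dashrightarrow\mathbb{P}^{n-1}_{\overline{k}}$. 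For $n\geq 3$ the transition between the models attached to two different omitted points is a genuinely nonlinear Cremona transformation of $\mathbb{P}^{n-1}$, not an element of $\mathrm{PGL}_n$: already for $n=3$, $(F_{P,3})_{\overline{k}}$ is the quintic del Pezzo surface $\overline{M}_{0,5}$, whose blow-downs to $\mathbb{P}^2$ are permuted nontrivially by the symmetric group acting on the Picard lattice, the transitions being quadratic Cremona maps. Hence the Galois action does not define a $\mathrm{PGL}_n$-cocycle, there is no reason for $F_{P,n}$ to admit a Brauer--Severi birational model, and the concluding Ch\^atelet/Lang--Nishimura argument has nothing to apply to. What your strategy actually requires is the rationality of twisted forms of $\overline{M}_{0,n+2}$, which is exactly the cited result \cite[1.2]{Flor} mentioned in the remark following the proposition --- a substantially harder statement than triviality of a Brauer class, and one that the paper deliberately avoids invoking.

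The paper's own proof is entirely different and elementary: writing $K=\kappa(P)$ and $n=2d-1$, it normalizes the coordinates of the composite $P\to\mathbb{P}^1\cong C\hookrightarrow\mathbb{P}^n$ using the $k$-algebra structure of $K$ (Lemmas \ref{lemma1}--\ref{lemma4}). The two coordinates of $P\to\mathbb{P}^1$ can be rescaled by a unique $\lambda\in K^*$ into the span of $1,\theta,\dots,\theta^{d}$ and then put into a row-echelon normal form depending on exactly $2d-2=n-1$ free parameters $(\alpha_i,\beta_i)$; these parameters in turn determine the re-embedding of $\mathbb{P}^1$ into $\mathbb{P}^n$ through $P$ uniquely. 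This gives an explicit birational map $\mathbb{A}^{n-1}_k\to F_{P,n}$. To salvage your approach you would have to either reprove the rationality of forms of $\overline{M}_{0,n+2}$ or produce such a direct parametrization.
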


We will give the rational parametrization explicitly. Let's do some preparation: Let $d$ be a positive integer, let $K/k$ be a separable extension of degree $2d+1$. Let $\theta$ be a primitive element. Let $P=\mathrm{Spec}(K)$. Let $i\colon P\to\mathbb{P}_k^{2d-1}$ be a fixed closed immersion.

\begin{lemma}\label{lemma1}
View $K$ as a vector space over $k$. Let $W\subset K$ be the subspace spanned by $\{1,\theta,\dots,\theta^{d}\}$. Let $b_1,b_2\in K$ be general elements, then up to $k^*$-scalar, there exists a unique element $\lambda\in K^*$ such that $\lambda b_1,\lambda b_2\in W$.\end{lemma}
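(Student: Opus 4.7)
The plan is to reduce the statement to a linear algebra problem on the $k$-vector space $K$. Write $V_i := b_i^{-1}W \subset K$ for $i=1,2$; an element $\lambda \in K^*$ satisfies $\lambda b_1, \lambda b_2 \in W$ if and only if $\lambda \in V_1 \cap V_2$. So the lemma amounts to showing that $\dim_k(V_1\cap V_2)=1$ for generic $b_1,b_2$. Since $\dim_k W = d+1$, each $V_i$ has $k$-dimension $d+1$, and $\dim_k K = 2d+1$, so the naive dimension bound gives
\[
\dim_k(V_1\cap V_2) \;\geq\; (d+1)+(d+1)-(2d+1) \;=\; 1
\]
automatically, with equality exactly when $V_1+V_2 = K$.

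Thus the problem reduces to showing $V_1+V_2 = K$ generically. Multiplying by $b_2$ (an isomorphism of $k$-vector spaces), this becomes $W + \mu W = K$ with $\mu := b_2/b_1$. Since $(b_1,b_2) \mapsto b_2/b_1$ surjects onto $K^*$, and ``$W+\mu W = K$'' is an open condition on $\mu$ (it says that the $k$-linear map $W\oplus W \to K$, $(w_1,w_2)\mapsto w_1 + \mu w_2$, has maximal rank $2d+1$, an open condition on the matrix entries), it suffices to exhibit a single $\mu_0 \in K$ with $W + \mu_0 W = K$.

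For this I take $\mu_0 = \theta^{d+1}$. Then $W$ is the $k$-span of $1,\theta,\ldots,\theta^d$ and $\mu_0 W$ is the $k$-span of $\theta^{d+1},\theta^{d+2},\ldots,\theta^{2d+1}$, so $W + \mu_0 W$ contains $1,\theta,\ldots,\theta^{2d}$, which is already a $k$-basis of $K$ (recall $[K:k]=2d+1$). Hence $W + \mu_0 W = K$, and the generic statement follows.

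I do not foresee a real obstacle: the argument is purely a dimension count plus the observation that $W$ generates $K$ as a ring, so some translate $\mu W$ complements $W$. The only thing to be careful about is the precise meaning of ``general'': the lemma holds on the Zariski open subset of $K\times K$ cut out by $b_1 \neq 0$ and $W + (b_2/b_1)W = K$, which is nonempty by the explicit $\mu_0$.
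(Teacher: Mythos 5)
Your proof is correct and follows essentially the same strategy as the paper's: a linear-algebra dimension count showing the solution space is at least one-dimensional, combined with openness/semicontinuity and a single explicit witness to get equality generically (the paper specializes $(b_1,b_2)=(1,\theta^d)$, you specialize $\mu=b_2/b_1=\theta^{d+1}$; both work). Your reformulation as $\dim_k(b_1^{-1}W\cap b_2^{-1}W)=1$, reduced to the one-parameter condition $W+\mu W=K$, is a clean coordinate-free packaging of the same argument.
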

\begin{proof}
Write $\lambda\in K$ as $\lambda=\sum_{i=0}^{2d} l_i\theta^i$, with $l_i\in k$. Since multiplication by $\lambda$ is a $k$-linear map on $K$, we may write $\lambda b_1=\sum_{i=0}^{2d} f_{1,i}\theta^i$, $\lambda b_2=\sum_{i=0}^{2d}f_{2,i}\theta^i$, where $f_{j,i}$ are $k$-linear forms in $l_i$. The conditions $\lambda b_1,\lambda b_2\in W$ impose $2d$ conditions on $l_i$: $f_{1,s}=f_{2,s}=0$, $s=d+1,\dots 2d$. We claim the solution to these $2d$ equations is a $1$-dimensional subspace for general $b_1,b_2$. We know dimension of the subspace of solutions is at least $1$, since there are $2d+1$ variables $\{l_i\}$ and $2d$ linear conditions. Since dimension of kernel is upper semi-continuous, it suffices to show there exists $b_1',b_2'$ such that the solution is $1$ dimensional. We choose $b_1'=1,b_2'=\theta^d$, then $\lambda \cdot 1\in W$ forces $l_s=0$ for $s=d+1,\dots,2d$ and $\lambda \cdot\theta^{d}\in W$ forces $l_s=0$ for $s=1,\dots,d$. The solution space is $\{(l_i)|l_0\in k, l_i=0,i>0\}$, which is $1$-dimensional.
\end{proof}

\begin{lemma}\label{lemma3}Let $C$ be a smooth geometrically irreducible genus zero curve. Let $j\colon P\to C$ be a general closed immersion. Then there exists a unique isomorphism $a\colon C\to\mathbb{P}^1_k$ such that $a\circ j$ is given by $[b_o\colon b_1]$, with $b_0,b_1\in K$ such that $b_0=\theta^{d}+\sum_{i=0}^{d-2}\alpha_{i}\theta^{i}$ and $b_1=\theta^{d-1}+\sum_{i=0}^{d-2}\beta_{i}\theta^{i}$. The choice for $\alpha_i,\beta_i\in k$ are unique.
\end{lemma}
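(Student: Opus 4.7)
The plan is to translate the existence-and-uniqueness assertion into a normalization problem for the coordinates of $a\circ j$ under the natural action of $\mathrm{GL}_2(k)\times K^*$. Pick any auxiliary isomorphism $a_0\colon C\to\mathbb{P}^1_k$; the composition $a_0\circ j$ is then represented by a pair $[c_0\colon c_1]$ with $c_0,c_1\in K$, and any other isomorphism $a=\sigma\circ a_0$ (with $\sigma\in\mathrm{PGL}_2(k)$) yields a pair lying in the same $\mathrm{PGL}_2(k)\times K^*$-orbit. The lemma is equivalent to saying that for general $j$ this orbit contains a unique representative $(b_0,b_1)$ with $b_0=\theta^d+\sum_{i=0}^{d-2}\alpha_i\theta^i$ and $b_1=\theta^{d-1}+\sum_{i=0}^{d-2}\beta_i\theta^i$.

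I would exhaust the two pieces of freedom separately. First, apply Lemma \ref{lemma1} to the general pair $(c_0,c_1)$: this produces a scalar $\lambda\in K^*$, unique up to $k^*$, with $\lambda c_0,\lambda c_1\in W$. Replacing $(c_0,c_1)$ by $(\lambda c_0,\lambda c_1)$ uses up exactly the $K^*/k^*$-freedom and reduces us to the case $c_0,c_1\in W$, with only the $\mathrm{GL}_2(k)$-action (whose center $k^*$ absorbs the remaining scaling) left to play with. Second, expand $c_i=p_i\theta^{d-1}+q_i\theta^d+(\text{lower-order monomials})$ and form the matrix $M=\begin{pmatrix}p_0&q_0\\p_1&q_1\end{pmatrix}\in\mathrm{Mat}_{2\times 2}(k)$. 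For $j$ general, $M$ is invertible, so the unique element of $\mathrm{GL}_2(k)$ converting the $(\theta^{d-1},\theta^d)$-coefficient pattern to the prescribed $\begin{pmatrix}0&1\\1&0\end{pmatrix}$ is $\begin{pmatrix}0&1\\1&0\end{pmatrix}M^{-1}$. This pins down $(b_0,b_1)$, and hence $a$, uniquely, and the remaining coefficients of $(b_0,b_1)$ in the basis $\{1,\theta,\ldots,\theta^{d-2}\}$ are the uniquely determined $\alpha_i,\beta_i$.

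The main obstacle is sustaining the word "general": one needs $(c_0,c_1)$ general enough both for Lemma \ref{lemma1} to apply and for the matrix $M$ produced in step one to be invertible. Both are non-empty Zariski-open conditions on $j$, and non-emptiness is verified by a single explicit choice — for instance, $c_0=\theta^d$, $c_1=\theta^{d-1}$, for which step one is trivial and $M$ is the permutation matrix. The counts also match: the $(2d+4)$-dimensional freedom in $(\mathrm{GL}_2(k)\times K^*)/k^*$ is used up exactly by the $2d$ linear conditions placing $(c_0,c_1)\in W\times W$ together with the $4$ conditions pinning the coefficients of $\theta^{d-1}$ and $\theta^d$, which makes the expected existence and uniqueness plausible a priori and is realized by the two-step normalization above.
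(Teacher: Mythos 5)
Your proposal is correct and takes essentially the same route as the paper's proof: first use the $\lambda$ from Lemma \ref{lemma1} to exhaust the $K^*/k^*$-freedom and land in $W\times W$, then use the residual $\mathrm{GL}_2(k)$-action (the paper phrases it as row-echelon reduction) to normalize the $\theta^{d}$ and $\theta^{d-1}$ coefficients, with the genericity/nonvanishing conditions verified at an explicit pair. The one point the paper records that you elide is why an isomorphism $a_0\colon C\to\mathbb{P}^1_k$ exists at all --- $C$ is a genus zero curve carrying a closed point of odd degree $2d+1$, hence has a $k$-rational point --- which you should state before picking $a_0$.
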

\begin{proof}
Note $C$ is a conic with odd degree point $P$, so $C$ is a rational curve. Choose an isomorphism $C\to\mathbb{P}_k^1$. Let $a\circ j\colon P\to\mathbb{P}^1_k$ be given by coordinates $b_0',b_1'$. Apply Lemma \ref{lemma1}, we may assume $b_i'\in W$. Let's write $b'_0=\sum_{i=0}^d\alpha_i'\theta^i,b_1'=\sum_{i=0}^d\beta_i'\theta^i$. For general closed immersion $j$, we may assume the coefficient of $\alpha_d'\neq 0$ and $\alpha_d'\beta_{d-1}'-\alpha_{d-1}'\beta_d'\neq 0$. Then we may reduce $b_0',b_1'$ to the required form, using the row echelon reduction
$$\begin{bmatrix}
\alpha_d' & \alpha_{d-1}' &\alpha_{d-2}'&\dots &\alpha_0'\\
\beta_d' & \beta_{d-1}' &\beta_{d-2}'&\dots &\beta_0'\\
\end{bmatrix}\mapsto
\begin{bmatrix}
1 & 0 & \alpha_{d-2} & \dots &\alpha_0\\
0 & 1 & \beta_{d-2} & \dots &\beta_0\\
\end{bmatrix}$$The choice of $\alpha_i,\beta_i$ are unique because the choice of $b_0',b_1'$ are unique up to $k^*$-scalar.\end{proof}
\begin{remark} Conversely, for any $\alpha_i,\beta_i\in k$, we can define an embedding $j\colon P\to \mathbb{P}^1_k$. We say $\alpha,\beta$ is the associated $(2d-2)$-tuple to $j$ and $j$ the associated immersion of the $(2d-2)$-tuple. They uniquely determines each other.
\end{remark}

\begin{lemma}\label{lemma2}
Let $V_1,V_2\subset K$ codimension $1$ subspaces. Then there exist a unique element $\lambda\in K^*$ (up to $k^*$-scalar) such that $\lambda V=V'$.
\end{lemma}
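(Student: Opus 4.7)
The plan is to parametrise codimension-$1$ $k$-subspaces of $K$ by points of $\mathbb{P}(K)$ via the non-degenerate trace pairing, then read off $\lambda$ as a ratio of the associated elements. Since $K/k$ is separable, the symmetric pairing $(x,y)\mapsto\mathrm{Tr}_{K/k}(xy)$ is non-degenerate on $K$, so the $k$-linear map $K\to K^{\vee}$ sending $a$ to $\mathrm{Tr}_{K/k}(a\,\cdot\,)$ is an isomorphism. Consequently every codimension-$1$ $k$-subspace of $K$ is uniquely of the form
\[
V_a:=\{x\in K:\mathrm{Tr}_{K/k}(ax)=0\}
\]
for some $a\in K^{*}$, determined modulo $k^{*}$.

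First I would write $V_1=V_{a_1}$ and $V_2=V_{a_2}$. Substituting $y=\lambda x$ in the defining equation of $V_{a_1}$ gives the identity $\lambda V_{a_1}=V_{a_1\lambda^{-1}}$. The condition $\lambda V_1=V_2$ therefore becomes $V_{a_1\lambda^{-1}}=V_{a_2}$, i.e.\ $a_1\lambda^{-1}\equiv a_2\pmod{k^{*}}$, which has the unique solution $\lambda\equiv a_1 a_2^{-1}\pmod{k^{*}}$. Existence and uniqueness fall out simultaneously.

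There is no serious obstacle once the trace-pairing framework is in place; the only real content is recognising that the set of codimension-$1$ $k$-subspaces of $K$ is a torsor under $K^{*}/k^{*}$ via multiplication. For a more elementary alternative in the style of Lemma~\ref{lemma1}, existence follows from a dimension count (writing $V_2=\ker\psi$ for a nonzero $\psi\colon K\to k$, the condition $\lambda V_1\subseteq V_2$ translates to $2d$ linear equations on $\lambda\in K\cong k^{2d+1}$, leaving a solution space of $k$-dimension at least $1$), and uniqueness follows by noting that if $\eta\in K^{*}$ satisfies $\eta V_1=V_1$, then $V_1$ is stable under multiplication by the subfield $k[\eta]\subseteq K$, hence is a $k[\eta]$-vector space, so $[k[\eta]:k]$ divides both $\dim_k V_1=2d$ and $[K:k]=2d+1$, forcing $[k[\eta]:k]=1$ and $\eta\in k^{*}$.
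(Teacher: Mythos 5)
Your main argument is correct and is essentially the paper's proof: both use non-degeneracy of the trace pairing to identify codimension-one subspaces with elements of $K^*/k^*$ (equivalently, generators of the orthogonal complements $V^\perp$, $V'^\perp$) and read off $\lambda$ as their ratio. Your elementary alternative (dimension count for existence, the divisibility argument with $\gcd(2d,2d+1)=1$ for uniqueness) is also valid, but the primary route matches the paper.
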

\begin{proof}Since $K/k$ is separable, the trace pairing $(x,y)\mapsto \mathrm{tr}(xy)$ is a non-degenerate $k$-bilinear pairing. Then $\lambda V=V'\Leftrightarrow (\lambda V)^\perp=V'^\perp\Leftrightarrow{\lambda}^{-1}(V^\perp)=V'^{\perp}$.
Let $v,v'$ be generator of $V^\perp, V'^{\perp}$. Then up to a $k^*$-scalar, we have $\lambda=v/v'$.
\end{proof}

\begin{lemma}\label{lemma4}For a general $(2d-2)$-tuple $(\alpha_i,\beta_i)_{i=0}^{d-2}$, there exists a unique closed immersion $f_{(\alpha,\beta)}\colon \mathbb{P}^1_k\to\mathbb{P}^{2d-1}_k$ such that $i=f_{(\alpha,\beta)}\circ j$.\end{lemma}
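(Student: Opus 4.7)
The plan is to reduce the existence and uniqueness of $f$ to a linear-algebra statement over $K$ and then invoke Lemma \ref{lemma2}. I would start by fixing coordinates: the immersion $j$ supplied by Lemma \ref{lemma3} is $[b_0:b_1]$, and the given $i$ is $[c_0:\dots:c_{2d-1}]$ with $c_l\in K$. Any $k$-morphism $f\colon\mathbb{P}^1_k\to\mathbb{P}^{2d-1}_k$ of the required type is described by a basis $(g_0,\dots,g_{2d-1})$ of $H^0(\mathbb{P}^1_k,\mathcal{O}(2d-1))$ up to simultaneous $k^*$-scaling; such a morphism is automatically a closed immersion because $\mathcal{O}(2d-1)$ is the complete very ample linear system embedding $\mathbb{P}^1_k$ as a rational normal curve of degree $2d-1$. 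The identity $f\circ j=i$ of $K$-points translates into: there exists $\lambda\in K^*$ with $g_l(b_0,b_1)=\lambda c_l$ for every $l$.

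I would next introduce two $k$-subspaces of $K$: the image $U:=j^*H^0(\mathbb{P}^1_k,\mathcal{O}(2d-1))$, which is the $k$-span of the monomials $\{b_0^a b_1^{2d-1-a}\}_{a=0}^{2d-1}$, and $V:=\mathrm{span}_k(c_0,\dots,c_{2d-1})$. The condition above is exactly $\lambda V\subseteq U$. Both subspaces have codimension one: for generic $(\alpha_i,\beta_i)$ the quotient $b_1/b_0\in K$ is a primitive element of $K/k$, so the $2d$ monomials are $k$-linearly independent and $\dim_k U=2d$; and since $i$ is a closed immersion of the linearly-general $P$, the $c_l$ are $k$-linearly independent (otherwise $P$ would lie on a $k$-rational hyperplane), so $\dim_k V=2d$. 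Lemma \ref{lemma2} then furnishes a $\lambda\in K^*$, unique up to $k^*$, with $\lambda V=U$. Because $j^*\colon H^0(\mathbb{P}^1_k,\mathcal{O}(2d-1))\to U$ is a $k$-linear isomorphism sending bases to bases, each $\lambda c_l$ lifts uniquely to a $g_l\in H^0$, the $(g_l)$ form a basis, and $f=[g_0:\dots:g_{2d-1}]$ is the required closed immersion. The residual $k^*$-ambiguity in $\lambda$ scales all $g_l$ uniformly, yielding the same morphism, which gives uniqueness of $f$.

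The main obstacle I anticipate is making the generality condition on $(\alpha_i,\beta_i)$ precise. The substantive Zariski open condition is that $b_1/b_0\in K$ be a primitive element; this is nontrivial when $K/k$ has proper intermediate subfields, but it is clearly open in the parameter space of $(2d-2)$-tuples. Non-emptiness can be checked by exhibiting a test point: for instance, the zero tuple $\alpha_i=\beta_i=0$ yields $b_0=\theta^{d}$ and $b_1=\theta^{d-1}$, whence $b_1/b_0=\theta^{-1}$ is primitive since $\theta$ is. On the resulting dense open locus the construction produces the unique $f$, establishing the lemma.
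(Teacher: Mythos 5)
Your proposal is correct and follows essentially the same route as the paper: both identify the span of the monomials $b_0^{a}b_1^{2d-1-a}$ with the span of the coordinates of $i$ via the unique scalar $\lambda\in K^*$ from Lemma \ref{lemma2}, and then read off the degree-$(2d-1)$ forms defining $f_{(\alpha,\beta)}$, with the generality condition verified at the test point $\alpha=\beta=0$. Your write-up is in fact slightly more careful than the paper's in spelling out why both subspaces have codimension one (primitivity of $b_1/b_0$ for $U$, linear general position of $P$ for $V$), which is needed for Lemma \ref{lemma2} to apply.
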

\begin{proof}
The immersion $i$ determines a $2d$-tuple $(p_0,\dots,p_{2d-1})\in K^{2d}-\{0\}$, unique up to $K^*$-scaling. We use notations as in Lemma \ref{lemma3}. For general $(\alpha_i,\beta_i)_{i=0}^{2d}$. The vectors $\{b_0^{2d-1-j}b_1^j\}_{j=0}^{2d-1}$ are linearly independent. This is shown by semi-continuity of rank, and the fact that $\{b_0^{2d-1-j}b_1^j\}_{j=0}^{2d-1}$ are $k$-linearly independent if we take $\alpha_i,\beta_i$ all equal to $0$.

By Lemma \ref{lemma2}, we may fix $(p_0,\dots,p_{2d-1})$, unique up to $k^*$-scaling, such that they span the same subspace generated by $\{b_0^{2d-1-j}b_1^j\}_{j=0}^{2d-1}$. We may write $p_t=\sum_{i=0}^{2d-1} a_{t,i}b_0^ib_1^{2d-1-i}$. Let $\{X_t\}_{t=0}^{2d+1}$ be coordinates of $\mathbb{P}^{2d+1}_k$, let $S,T$ be coordinates of $\mathbb{P}^1_k$. Then
$f_{\alpha,\beta}^*X_t=\sum_{i=0}^{2d+1} a_{t,i}S^iT^{2d+1-i}$ define the unique required morphism.
\end{proof}
\begin{proof}[Proof of Proposition \ref{n+2odd}] Let $d=\frac{n+1}{2}$, by Lemma \ref{lemma3} and \ref{lemma4}, there exists a birational map $$\mathbb{A}^{2d-2}_{\alpha,\beta}\to F_{P,n},\ \ \ (\alpha,\beta)\mapsto f_{\alpha,\beta}.$$\end{proof}

\begin{remark} When $\mathrm{char}(k)\neq2$, Proposition \ref{n+2odd} can also be recovered from \cite[1.2]{Flor}, using Kapranov's identification of $(F_{P,n})_{\overline{k}}$ with $(\overline{M}_{0,n+2})_{\overline{k}}$, see \cite{Kap}.
\end{remark}

When both $n$ and $\mathrm{deg}(P)$ are even, we know $F_{P,n}$ is rational for $n=2,\mathrm{deg}(P)=4$, because the plane conics are parameterized by their equations. We may ask:
\begin{question} Is $F_{P,n}$ is rational when $n$ and $\mathrm{deg}(P)$ are both even?
\end{question} \section{Stable birationality}\label{sec7}
Let $k$ be a field with $\mathrm{char}(k)=0$. Let $P$ be a closed subscheme of $\mathbb{P}^n_k$, which geometrically consists of $n+3$ points in linearly general position. Denote the unique rational normal curve passing through $P$ by $C_P$.

\subsection{Cubic surface}
Let $X\subset\mathbb{P}^3_k$ be a smooth cubic surface.
\begin{proposition}The varieties $\mathrm{Sym}^6(X)$ and $\mathrm{Sym}^3(X)$ are stably birational.
\end{proposition}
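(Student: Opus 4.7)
The plan is to construct a dominant rational map $f\colon \mathrm{Sym}^6(X)\dashrightarrow \mathrm{Sym}^3(X)$ by residual intersection with the unique twisted cubic through a generic length-$6$ subscheme, and then identify its generic fiber with a stably rational $F_{Q,3}$ so that stable birationality follows. Concretely: a generic length-$6$ closed subscheme $P\subset X\subset \mathbb{P}^3_k$ has geometric support in linearly general position, so by Proposition~\ref{Rnc3} there is a unique twisted cubic $C_P\subset\mathbb{P}^3_k$ through $P$. Generically $C_P\not\subset X$ (a smooth cubic surface contains only finitely many twisted cubics of each numerical class), so $C_P\cap X$ is a zero-cycle of degree $3\cdot 3=9$ on $X$; set $f(P)\mathrel{:=}C_P\cap X-P$, a length-$3$ subscheme of $X$.

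To compute the generic fiber, let $K=k(\mathrm{Sym}^3(X))$ and let $Q\subset X_K$ denote the universal length-$3$ subscheme, which is generically separable with geometric points in linearly general position in $\mathbb{P}^3_{\bar K}$. A $K$-point of $f^{-1}(Q)$ corresponds to a length-$6$ subscheme $P\subset X_K$ such that $P+Q$ lies on some twisted cubic $C\subset\mathbb{P}^3_K$; by degrees $P+Q=C\cap X_K$, and so $P=C\cap X_K-Q$ is determined by $C$. Hence the fiber is an open subscheme of $F_{Q,3}$, the scheme of twisted cubics through $Q$. Since $n=3$ is odd, Proposition~\ref{general} gives that $F_{Q,3}$ is stably rational over $K$, so the generic fiber of $f$ is stably rational.

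A dominant rational map with stably rational generic fiber yields stable birationality of the total spaces: if $F\times_K\mathbb{P}^a_K$ is birational to $\mathbb{P}^b_K$ over $K$, then tracing function fields gives $k(\mathrm{Sym}^6(X))(t_1,\dots,t_a)\cong K(u_1,\dots,u_b)\cong k(\mathrm{Sym}^3(X))(u_1,\dots,u_b)$, hence a birational equivalence $\mathrm{Sym}^6(X)\times\mathbb{P}^a_k\sim\mathrm{Sym}^3(X)\times\mathbb{P}^b_k$. The main technical point I anticipate is the bookkeeping of genericity (linear general position for $P$ and for the residual $Q$, and $C_P\not\subset X$) and a clean scheme-theoretic setup of $f$ so that the universal $Q$ of degree $3$ qualifies as input to $F_{Q,3}$; the dimension check $\dim F_{Q,3}=12-2\cdot 3=6=\dim\mathrm{Sym}^6(X)-\dim\mathrm{Sym}^3(X)$ matches the expected fiber dimension, which gives confidence that the construction is correct.
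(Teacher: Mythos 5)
Your argument is correct in substance, but it is a genuinely different route from the paper's proof of this proposition --- it is in fact the route the paper uses for the cubic 3-fold and 4-fold case, and which the paper's closing remark observes would suffice there for stable birationality. For the cubic surface the paper instead fixes a plane $H\subset\mathbb{P}^3_k$ and shows that $P\mapsto\bigl(C_P\cap H,\ (C_P\cap X)\backslash P\bigr)$ is a \emph{birational} map $\mathrm{Sym}^6(X)\dashrightarrow\mathrm{Sym}^3(H)\times\mathrm{Sym}^3(X)$, with explicit inverse given by the unique twisted cubic through the six points $P_1\cup P_2$; stable birationality then follows from Mattuck's rationality of $\mathrm{Sym}^3(\mathbb{P}^2)$. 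The two proofs are closely related: the paper's first coordinate $C\mapsto C\cap H$ is precisely a fiberwise birational trivialization $F_{Q,3}\dashrightarrow\mathrm{Sym}^3(H_K)$, so the paper in effect shows the generic fiber is \emph{rational} (not merely stably rational) and gets the sharper conclusion that $\mathrm{Sym}^6(X)$ is birational to $\mathrm{Sym}^3(X)\times\mathbb{P}^6_k$, while your appeal to Proposition \ref{general} is less explicit but fits uniformly into the machinery of Sections \ref{sec6} and \ref{sec7}. One inaccuracy to fix: your justification that $C_P\nsubseteq X$ for generic $P$ --- that a smooth cubic surface contains only finitely many twisted cubics of each numerical class --- is false. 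Writing $X$ as a blow-up of $\mathbb{P}^2$ at six points, the strict transforms of general lines already give a $2$-dimensional family of twisted cubics on $X$ (and there are $72$ such families). The claim you actually need still holds by a dimension count: the twisted cubics contained in $X$ form a family of dimension $2$, so the length-$6$ subschemes supported on one of them sweep out at most $2+6=8<12=\dim\mathrm{Sym}^6(X)$ dimensions, and hence the generic $P$ has $C_P\nsubseteq X$.
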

\begin{proof}
Fix a rational plane $H$ in $\mathbb{P}^3_k$. There exist rational maps
\begin{align*}
\phi_1\colon &\mathrm{Sym}^6(X)\dashrightarrow\mathrm{Sym}^3(H), \ \ P\mapsto C_P\cap H,\\
\phi_2\colon &\mathrm{Sym}^6(X)\dashrightarrow\mathrm{Sym}^3(X), \ \ P\mapsto (C_P\cap X)\backslash P.
\end{align*}
Then we have the rational map $(\phi_1,\phi_2)\colon \mathrm{Sym}^6(X)\dashrightarrow\mathrm{Sym}^3(X)\times \mathrm{Sym}^3(H)$. This map is birational with inverse map $$(\phi_1,\phi_2)^{-1}\colon \mathrm{Sym}^3(X)\times \mathrm{Sym}^3(H)\dashrightarrow\mathrm{Sym}^6(X), \ \ (P_1,P_2)\mapsto (C_{\{P_1\cup P_2\}}\cap X)\backslash P_1.$$
Since $\mathrm{Sym}^3(H)$ is rational, see \cite{Mut}, we know $\mathrm{Sym}^6(X)$ and $\mathrm{Sym}^3(X)$ are stably birational.
\end{proof}

\begin{remark}It is not clear if $\mathrm{Sym}^3(X)$ is rational, although we know it is geometrically rational and it contains rational points parameterized by $\mathrm{Gr}(2,4)$.
\end{remark}
\subsection{Cubic 3-fold and 4-fold}
\begin{proposition}Let $n=3$ or $4$. Let $X\subset\mathbb{P}^{n+1}_k$ be a smooth cubic $n$-fold. The variety $\mathrm{Sym}^{n+4}(X)$ is birational to $\mathrm{Sym}^{2n-1}(X)\times\mathbb{P}_k^{15-3n}$.
\end{proposition}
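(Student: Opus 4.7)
The plan is to generalize the proof of the previous proposition, using the rationality of the moduli spaces $F_{P,N}$ from Section \ref{sec6} in place of the rationality of $\mathrm{Sym}^3(\mathbb{P}^2)$. Write $N = n+1$, so $X \subset \mathbb{P}^N$. A generic $P \in \mathrm{Sym}^{n+4}(X)$ has $N+3 = n+4$ geometric points in linearly general position, so Proposition \ref{Rnc3} provides a unique rational normal curve $C_P \subset \mathbb{P}^N$ of degree $N$ through $P$. Define the residual-intersection map
\[
\phi_2 \colon \mathrm{Sym}^{n+4}(X) \dashrightarrow \mathrm{Sym}^{2n-1}(X), \qquad P \longmapsto (C_P \cap X) \setminus P,
\]
which is well-defined since $\deg(C_P \cap X) = 3N = 3n+3$. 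The fiber of $\phi_2$ over a generic $Q \in \mathrm{Sym}^{2n-1}(X)$ is birational to $F_{Q,N}$: each $C \in F_{Q,N}$ with $C \not\subset X$ yields a unique preimage $P = (C \cap X)\setminus Q$.

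To trivialize this fibration I apply the rationality results of Section \ref{sec6}. For $n=3$, $\deg Q = 5 = N+1$, and Proposition \ref{n+1} gives a birational identification of $F_{Q,4}$ with an open subset of $\mathrm{Gr}(2,5)$ via the Cremona transformation centered at $Q$. For $n=4$, $\deg Q = 7 = N+2$ with $N=5$ odd, and Proposition \ref{n+2odd} provides an explicit $(\alpha,\beta)$-parametrization of $F_{Q,5}$ by affine space. Both parametrizations are algebraic in the coordinates of $Q$, so they globalize: applied over the function field $K = k(\mathrm{Sym}^{2n-1}(X))$ to the universal $Q_K$, the generic fiber $F_{Q_K,N}$ becomes birational to $\mathbb{P}^d_K$ for $d$ the appropriate dimension. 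Combining with $\phi_2$ yields a birational equivalence $\mathrm{Sym}^{n+4}(X) \overset{\mathrm{birat.}}{\sim} \mathrm{Sym}^{2n-1}(X) \times \mathbb{P}^d$. The inverse, in direct analogy with $(P_1, P_2) \mapsto (C_{P_1\cup P_2}\cap X) \setminus P_1$ from the cubic surface case, sends $(Q, v)$ to $(C_v \cap X)\setminus Q$, where the parameter $v$ determines a rational normal curve $C_v \in F_{Q,N}$ via Proposition \ref{n+1} (respectively Proposition \ref{n+2odd}).

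The main subtlety will be to verify that these rational maps are mutually inverse birational equivalences on dense opens, rather than merely dominant with rational fibers. I would check: (a) the generic $Q \in \mathrm{Sym}^{2n-1}(X)$ is separable and geometrically in linearly general position, which follows from smoothness of $X$ by an open-condition argument; (b) the hypotheses of Proposition \ref{n+1} or \ref{n+2odd} are satisfied at the generic $Q_K$; and (c) for a generic parameter $v$, the recovered curve $C_v$ is not contained in $X$, so the residual intersection has the correct degree $n+4$. Each of these is an open condition, verified by exhibiting a single example where it holds (for example, a single $P \in \mathrm{Sym}^{n+4}(X)$ whose geometric points are in linearly general position and whose $C_P$ is not contained in $X$).
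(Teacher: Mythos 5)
Your proposal follows essentially the same route as the paper: identify the generic fiber of the residual-intersection map $P\mapsto (C_P\cap X)\setminus P$ with $F_{R,\,n+1}$ for the universal zero-cycle $R$ over the function field of $\mathrm{Sym}^{2n-1}(X)$, and conclude by the rationality results of Section \ref{sec6} (Proposition \ref{n+1} for $n=3$, Proposition \ref{n+2odd} for $n=4$). Your additional checks (a)--(c) and the explicit inverse are elaborations the paper leaves implicit, not a different argument.
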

\begin{proof} Consider the rational map $$f_n\colon\mathrm{Sym}^{n+4}(X)\dashrightarrow\mathrm{Sym}^{2n-1}(X),\ \ P\mapsto (C_P\cap X)\backslash P.$$ Note that rational normal curves in $\mathbb{P}^{n+1}_k$ are determined by $n+4$ points. Since $2n-1<n+4$, we know the map $f_n$ is dominant. If suffices to show the generic fiber of $f_n$ is rational.

Let $K$ be the function field of $\mathrm{Sym}^{2n-1}(X)$. Let $R\subset X_K\subset\mathbb{P}^{n+1}_K$ be the universal degree $2n-1$ zero-cycle. The generic fiber of $f_n$ is $F_{R,2n-1}$. The rationality of $F_{R,5}$ follows from Proposition \ref{n+1}, and the rationality of $F_{R,7}$ follows from Proposition \ref{n+2odd}.
\end{proof}
\begin{remark}If we only care about stable birationality, it suffices to note that $F_{R,2n-1}$ is stably rational by Proposition \ref{general}.
\end{remark}
\begin{remark}Let $X\subset\mathbb{P}^n_k$ be a smooth hypersurface of degree $m$. Let $l\in[1,\frac{m(n+3)}{2})$ be an integer. The same method shows $\mathrm{Sym}^l(X)$ and $\mathrm{Sym}^{m(n+3)-l}(X)$ are stably birational if either $l$ or $n$ is odd.
\end{remark}

\bibliographystyle{alpha}
\bibliography{references}

\end{document}